\documentclass[12pt]{amsart}

\usepackage{amssymb,bm} % \mathbb
\usepackage{tikz}
\usepackage[normalem]{ulem}
\usepackage{CJKutf8} %日本語でコメントを入れるパッケージ
\numberwithin{equation}{section}

\newtheorem{theorem}{Theorem}[section]

\newtheorem{lemma}[theorem]{Lemma}
\newtheorem{corollary}[theorem]{Corollary}

\theoremstyle{definition}

\newtheorem{example}[theorem]{Example}

\newtheorem{conjecture}[theorem]{Conjecture}
\newtheorem{remark}[theorem]{Remark}

\newcommand{\ZZ}{ \ensuremath{\mathbb{Z}}}

\newcommand{\init}{\ensuremath{\mathop{\mathrm{in}}}}

\newcommand{\mideal}{\ensuremath{\mathfrak{m}}}

\newcommand{\PF}{\mathrm{PF}}
\newcommand{\socle}{\mathrm{socle}}
\newcommand{\F}{\mathbb{K}}
\newcommand{\Ap}{\mathrm{Ap}}

\begin{document}

\title[A criterion for determinantal presentations]{A criterion for determinantal presentations of numerical semigroup rings}

\author{Satoshi Murai}
\address{
Satoshi Murai,
Department of Mathematics
Faculty of Education
Waseda University,
1-6-1 Nishi-Waseda, Shinjuku, Tokyo 169-8050, Japan}
\email{s-murai@waseda.jp}

\author{Kou Takahashi}
\address{
Kou Takahashi,
Department of Mathematics
Faculty of Education
Waseda University,
1-6-1 Nishi-Waseda, Shinjuku, Tokyo 169-8050, Japan}
\email{k\_takahash@akane.waseda.jp}

%\author{Another Author}
%\address{
%}
%\email{}

%Keyword and Subject Classes (if needed)
%\keywords{}
%\subjclass[2000]{}
%\dedicatory{Dedicated to on the occasion of his birthday}

\begin{abstract}
We study numerical semigroups whose defining ideals admit determinantal presentations by the $2\times2$ minors of a $2\times n$ matrix. A conjecture of Cuong, Kien, Matsuoka, and Truong relates such presentations to arithmetic progressions of pseudo-Frobenius numbers. We prove a numerical criterion for this determinantal presentation, reducing the conjecture to a problem on factorizations of
maximal elements of an Ap\'ery set in the numerical semigroup. As an application, we verify the conjecture in embedding dimension four under a unique factorization assumption on an Ap\'ery set.
\end{abstract}

\maketitle

\section{Introduction}

In this paper, we study numerical semigroups whose defining ideals are generated by the $2\times 2$ minors of a $2\times n$ matrix.
We begin by recalling some basic notation and background; we refer to \cite{RGS} for general background on numerical semigroups. For positive integers $a_1,\dots,a_n$ with $\gcd(a_1,\dots,a_n)=1$, let
\[
\langle a_1,\dots,a_n\rangle
=
\left\{
\lambda_1a_1+\cdots+\lambda_n a_n
\;\middle|\;
\lambda_1,\dots,\lambda_n\in \mathbb Z_{\geq 0}
\right\}
\]
be the numerical semigroup generated by $a_1,\dots,a_n$.
%We always assume that $\gcd(a_1,\dots,a_n)=1$.
Fix a field $\F$. If
$H=\langle a_1,\dots,a_n\rangle$
is minimally generated by $a_1,\dots,a_n$, then the defining ideal $I_H$ of the numerical semigroup ring $\F[H]$ is the kernel of the homomorphism
\[
\pi:\F[x_1,\dots,x_n]\longrightarrow \F[H]=\F[t^{a_1},\dots,t^{a_n}],
\qquad
\pi(x_i)=t^{a_i}.
\]
Determining generators of the defining ideal of a numerical semigroup ring is a fundamental problem in the study of numerical semigroups; see, for example, \cite{MS} for an overview of problems and recent developments.
A classical result of Herzog \cite{Herzog} describes the defining ideals of numerical semigroups of embedding dimension three. In particular, if $H$ is generated by three elements and is not symmetric, then $I_H$ is generated by the $2\times2$ minors of a $2\times3$ matrix.
Recently, Cuong, Kien, Matsuoka, and Truong proposed the following conjectural characterization of higher-dimensional numerical semigroups with such a determinantal defining ideal; see \cite[Conjecture 1.1]{KMO}. Recall that an integer
$w\in \mathbb Z_{\geq0}\setminus H$
is called a \emph{pseudo-Frobenius number} of $H$ if
$w+a_i\in H$
for every $i$. We denote by $\PF(H)$ the set of pseudo-Frobenius numbers of $H$.
For a matrix $M$, we write $I_2(M)$ for the ideal generated by its
$2$-minors.

\begin{conjecture}[Cuong--Kien--Matsuoka--Truong]
\label{conj}
Let $n\geq3$, and let
$H=\langle a_1,\dots,a_n\rangle$
be a numerical semigroup minimally generated by $a_1,\dots,a_n$. Then the following conditions are equivalent.
\begin{itemize}
    \item[(A)] $I_H$ is generated by the $2\times2$ minors of a
$2\times n$  whose entries are homogeneous elements of positive degree.
    \item[(B)] There exist positive integers $c_1,\dots,c_n,d_1,\dots,d_n$ such that
    \begin{align}
    \label{2minorideal}
    I_H
    =
    I_2
    \begin{pmatrix}
    x_1^{c_1} & \cdots & x_{n-1}^{c_{n-1}} & x_n^{c_n}\\
    x_2^{d_2} & \cdots & x_n^{d_n} & x_1^{d_1}
    \end{pmatrix}.
    \end{align}
    \item[(C)] $\PF(H)$ forms an arithmetic progression of length $n-1$.
\end{itemize}
\end{conjecture}

The implication $(A)\Rightarrow(C)$ was proved in \cite{KM}; hence the open part of the conjecture is basically the implication $(C)\Rightarrow(B)$. The conjecture is known in several special cases, including almost symmetric numerical semigroups \cite{GKMT}, numerical semigroups whose embedding dimension is large relative to their multiplicity \cite{KM,Ta}, and stretched numerical semigroups \cite{KMO}.
Algebraic properties of numerical semigroup rings with determinantal defining ideals as in \eqref{2minorideal} have also studied in \cite{KMN}.

A difficulty in proving $(C)\Rightarrow(B)$ is that, even after suitable candidates for the exponents
$c_1,\dots,c_n,d_1,\dots,d_n$
have been identified, one still has to prove that the corresponding determinantal ideal is indeed the defining ideal $I_H$. Our first main result gives a numerical criterion that removes this difficulty.

To explain the result, let us recall a consequence of condition $(B)$. It was proved in \cite[Theorem 3]{KM} that, if \eqref{2minorideal} holds, then
\[
\PF(H)
=
\{h+\alpha,h+2\alpha,\dots,h+(n-1)\alpha\},
\]
where
\[
h=\sum_{i=1}^n(c_i-1)a_i
\text{ and }
\alpha=d_{i+1}a_{i+1}-c_ia_i \text{ for all 
$i$},
\]
where we consider $d_{n+1}=d_1$ and $a_{n+1}=a_1$. Moreover, one obtains explicit numerical expressions of pseudo-Frobenius numbers in terms of the exponents $c_i$ and $d_i$:
\begin{align}
\label{PFformula}
h+k\alpha
=
-(a_1+\cdots+a_n)
+
\sum_{\ell=2}^{k+1}d_\ell a_\ell
+
\sum_{\ell=k+1}^{n}c_\ell a_\ell
\end{align}
for $k=0,1,2,\dots,n$.
Thus condition $(B)$ imposes very explicit numerical identities on the pseudo-Frobenius numbers. A natural question is whether, conversely, these numerical identities are sufficient to recover the determinantal presentation of $I_H$. Our first main theorem shows that this is indeed the case.

\begin{theorem}
\label{maincor}
Let
$H=\langle a_1,\dots,a_n\rangle$
be a numerical semigroup minimally generated by $a_1,\dots,a_n$, and assume that
\[
\PF(H)
=
\{h+\alpha,h+2\alpha,\dots,h+(n-1)\alpha\}
\]
for some $h,\alpha\in\mathbb Z$, where $\alpha \ne 0$. Let
$c_1,\dots,c_n,d_1,\dots,d_n$
be positive integers. Then the following conditions are equivalent.
\begin{itemize}
    \item[(1)]
    $I_H
    =
    I_2
    \begin{pmatrix}
    x_1^{c_1} & \cdots & x_{n-1}^{c_{n-1}} & x_n^{c_n}\\
    x_2^{d_2} & \cdots & x_n^{d_n} & x_1^{d_1}
    \end{pmatrix}.
    $
    \item[(2)] The identities \eqref{PFformula} hold for all $k=0,1,2,\dots,n$.
\end{itemize}
\end{theorem}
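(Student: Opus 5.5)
The implication $(1)\Rightarrow(2)$ is exactly \cite[Theorem 3]{KM}, recalled before the statement. So the work is in $(2)\Rightarrow(1)$. Put
\[
M=\begin{pmatrix}
x_1^{c_1} & \cdots & x_n^{c_n}\\
x_2^{d_2} & \cdots & x_1^{d_1}
\end{pmatrix},
\qquad J=I_2(M).
\]
I would prove $J=I_H$ in two steps: first $J\subseteq I_H$, then equality by a length comparison modulo $x_1$.

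\emph{Step 1: $J\subseteq I_H$.} Subtracting the identities \eqref{PFformula} for $k-1$ and $k$ gives
\[
d_{k+1}a_{k+1}-c_ka_k=\alpha \qquad\text{for } k=1,\dots,n,
\]
with indices mod $n$. The cases $k=0$ and $k=n$ are what make the wrap-around column consistent. Hence every column of $M$ is homogeneous, with the bottom entry of degree $\alpha$ more than the top entry. Each minor $x_i^{c_i}x_{j+1}^{d_{j+1}}-x_j^{c_j}x_{i+1}^{d_{i+1}}$ is then a homogeneous binomial with $\pi$-image $t^{s}-t^{s}=0$. So $J\subseteq I_H$, and we obtain a graded surjection $\varphi:\F[x]/J\to \F[H]$.

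\emph{Step 2: reduction to a length inequality.} Let $K=\ker\varphi=I_H/J$ and tensor $0\to K\to \F[x]/J\to \F[H]\to 0$ with $\F[x]/(x_1)$. Since $t^{a_1}$ is a nonzerodivisor on $\F[H]$, $\mathrm{Tor}_1(\F[H],\F[x]/(x_1))=0$. Hence
\[
0\to K/x_1K\to \F[x]/(J,x_1)\to \F[H]/(t^{a_1})\to 0
\]
is exact. The right-hand term has dimension $|\Ap(H,a_1)|=a_1$. So if $\dim_\F \F[x]/(J,x_1)\le a_1$, then $K/x_1K=0$, and graded Nakayama gives $K=0$, i.e. $J=I_H$. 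This route avoids having to know in advance that $J$ has height $n-1$ or is Cohen--Macaulay.

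\emph{Step 3: the length bound, which I expect to be the main obstacle.} Setting $x_1=0$ turns $M$ into a matrix whose first bottom entry $x_1^{d_1}$ vanishes. Modulo $x_1$, the ideal $(J,x_1)$ therefore contains the monomials $x_i^{c_i}x_n^{c_n}$ (more precisely the minors involving the last column become monomials), together with binomials among the remaining variables. I would fix a term order and describe a set $S$ of standard monomials in $x_2,\dots,x_n$ spanning $\F[x]/(J,x_1)$.

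By the surjectivity above, the degrees of $S$ cover $\Ap(H,a_1)$. It then suffices to show that distinct elements of $S$ have distinct degrees, or equivalently $|S|\le a_1$. For this I would identify the maximal elements of $S$. Via \eqref{PFformula}, they should be the monomials
\[
x_2^{d_2-1}\cdots x_{k+1}^{d_{k+1}-1}\,x_{k+1}^{\,c_{k+1}-d_{k+1}}\cdots x_n^{c_n-1},
\]
appropriately interpreted, with degrees $h+k\alpha+a_1$, so that they correspond to the maximal elements $\PF(H)+a_1$ of $\Ap(H,a_1)$.

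Then I would show that two standard monomials of the same degree are congruent modulo $J$. The tool is the relations $d_{k+1}a_{k+1}-c_ka_k=\alpha$, which let one move between the "staircase" pieces attached to consecutive pseudo-Frobenius numbers. The hypothesis $\alpha\neq0$ together with the minimality of the generators should be what rules out the degenerate coincidences; this is where I would concentrate the effort. If the direct count is awkward, I would first try a Gröbner basis argument for $J$ with $x_1$ last in a lex order, since the cyclic structure of $M$ makes the $S$-pairs of the minors tractable.
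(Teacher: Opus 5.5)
Your Steps 1 and 2 are correct. Step 2 is in fact a self-contained proof of the reduction the paper imports as Lemma \ref{lem6}. The gap is Step 3: once Step 2 is in place it carries the entire content of the theorem, and you give no argument for it.

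The mechanism you sketch cannot work as stated. Distinct standard monomials are linearly independent modulo $(J,x_1)$, so they are never congruent modulo $J$. What you actually need is that no two of them share a degree, and the relations $d_{k+1}a_{k+1}-c_ka_k=\alpha$ do not give this. Concretely, for the weighted reverse lexicographic order with $x_1$ smallest, the standard monomials of $(J,x_1)$ are the divisors of
$u_k=\bigl(\prod_{2\le i\le k}x_i^{d_i-1}\bigr)x_k\bigl(\prod_{k\le j\le n}x_j^{c_j-1}\bigr)$, $2\le k\le n$. Two corrections to your sketch:
\begin{itemize}
\item $x_k$ carries exponent $c_k+d_k-1$ in $u_k$, which your displayed formula misses.
\item The minors that become monomials mod $x_1$ are $x_{i+1}^{d_{i+1}}x_n^{c_n}$ and $x_2^{d_2}x_j^{c_j}$, not $x_i^{c_i}x_n^{c_n}$.
\end{itemize}
Hence $\dim_\F \F[x]/(J,x_1)=\sum_{\ell=2}^{n+1}\prod_{i=2}^{\ell-1}d_i\prod_{j=\ell}^{n}c_j$, a quantity independent of the term order. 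Going once around the cycle of relations only yields $a_1\bigl(\prod_i d_i-\prod_i c_i\bigr)=\alpha\cdot\dim_\F \F[x]/(J,x_1)$. So a direct count would need $\alpha=\prod_i d_i-\prod_i c_i$. This is true a posteriori, but it is not derivable from the relations, which determine $(a_1,\dots,a_n,\alpha)$ only up to a common scalar. Appealing to ``$\alpha\neq0$ and minimality'' does not close this.

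The missing idea is to compare socles instead of dimensions; this is how the hypothesis on $\PF(H)$ actually enters.
\begin{itemize}
\item Use the weighted reverse lexicographic order with $x_1$ smallest, not lex. This guarantees $\init(J+(x_1))=\init(J)+(x_1)$.
\item By Buchberger's criterion, using homogeneity, the $2$-minors form a Gr\"obner basis of $J$ with initial terms $x_{i+1}^{d_{i+1}}x_j^{c_j}$. This gives the standard monomials above.
\item One has $\dim_\F\socle(\F[x]/I)\le\dim_\F\socle(\F[x]/\init(I))$. One also checks by induction, using the minors $x_s^{d_s}x_t^{c_t}-x_{s-1}^{c_{s-1}}x_{t+1}^{d_{t+1}}$, that every $x_\ell u_k$ lies in $J+(x_1)$. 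Together these show the $u_k$ form a basis of $\socle(\F[x]/(J,x_1))$.
\item By \eqref{PFformula}, $\deg u_k=a_1+h+(k-1)\alpha$. So the surjection $\psi:\F[x]/(J,x_1)\to\F[H]/(t^{a_1})$ sends this basis bijectively onto the basis $\{t^w\mid w\in a_1+\PF(H)\}$ of the target's socle (Lemma \ref{lem7}). In particular $\psi$ is injective on socles.
\item Suppose $0\neq f\in\ker\psi$, i.e.\ $K/x_1K\neq0$ in your exact sequence. Finite length gives $m$ with $0\neq mf$ in the socle, but $\psi(mf)=0$, a contradiction.
\end{itemize}
Thus $J+(x_1)=I_H+(x_1)$ without ever having to count $\Ap(H,a_1)$.
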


The significance of Theorem \ref{maincor} is that it separates the numerical-semigroup-theoretic part of Conjecture \ref{conj} from the commutative-algebraic verification of the defining ideal. 
In the proofs of the previously known cases of the conjecture \cite{GKMT,KM,KMO,Ta}, one first determines a candidate determinantal presentation and then proves separately that the resulting determinantal ideal coincides with $I_H$.
Theorem \ref{maincor} shows that the latter step follows automatically once the appropriate numerical identities for the pseudo-Frobenius numbers have been established.

In the second part of the paper,
as an application of Theorem \ref{maincor}, we obtain a new partial affirmative result for Conjecture \ref{conj} in embedding dimension four.
For $w\in H$, we say that $w$ has a \emph{unique factorization} in $H$ if there is a unique tuple
$(k_1,\dots,k_n)\in\mathbb Z_{\geq0}^n$
such that
$
w=k_1a_1+\cdots+k_n a_n.
$
For a nonzero element $a\in H$, let
\[
\Ap(H,a)
=
\{w\in H\mid w-a\notin H\}
\]
be the Ap\'ery set of $H$ with respect to $a$.
We say that $\Ap(H,a)$ has unique factorizations in $H$ if every element of $\Ap(H,a)$ has a unique factorization in $H$.
%This type of uniqueness condition on Ap'ery sets has been studied previously in \cite{Rosales}.
Our second main result is the following.

\begin{theorem}
\label{secondthm}
Conjecture \ref{conj} holds when $n=4$ and $\Ap(H,a_i)$ has unique factorizations in $H$ for some $i$.
\end{theorem}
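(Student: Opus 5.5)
Theorem \ref{secondthm} asks for the implication $(C)\Rightarrow(B)$ in embedding dimension four. The remaining implications are known: $(B)\Rightarrow(A)$ is trivial and $(A)\Rightarrow(C)$ is \cite{KM}. The plan is to reduce to Theorem \ref{maincor}. So assume $n=4$, $\PF(H)=\{h+\alpha,h+2\alpha,h+3\alpha\}$ with $\alpha\neq0$, and, after relabeling, that $\Ap(H,a_1)$ has unique factorizations. After possibly reversing the cyclic order of the generators we may take $\alpha>0$, since this reversal swaps the roles of the two rows. It then suffices to produce positive integers $c_1,\dots,c_4,d_1,\dots,d_4$ for which the five identities \eqref{PFformula}, $k=0,\dots,4$, hold.

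The candidates come from the maximal elements of the Apéry set. Recall that $w\in\PF(H)$ exactly when $w+a_1$ is maximal in $\Ap(H,a_1)$ with respect to $\le_H$. So the three elements $h+k\alpha+a_1$, $k=1,2,3$, are the maximal elements, and each has a unique factorization $\lambda^{(k)}\in\ZZ_{\ge0}^4$ with $\lambda^{(k)}_1=0$. Rewriting \eqref{PFformula} with $a_1$ moved to the left gives
\[
h+k\alpha+a_1=\sum_{\ell=2}^{k+1}(d_\ell-1)a_\ell+\sum_{\ell=k+2}^{4}(c_\ell-1)a_\ell+(c_{k+1}-1)a_{k+1}+\dots,
\]
with the obvious adjustments at the boundary. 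This suggests a target factorization shape for each $k$. For $k=1$ the shape should be $((d_2-1)+c_2)a_2+(c_3-1)a_3+(c_4-1)a_4$, with suitable bookkeeping, and similarly for $k=2,3$. The exponents $c_\ell,d_\ell$ are then read off from the coordinates of $\lambda^{(1)},\lambda^{(2)},\lambda^{(3)}$. The identities for $k=0$ and $k=4$ fix $c_1$ and $d_1$ through the two expressions for $h$ and $h+4\alpha$.

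The key step, and the main obstacle, is to show that the three factorizations have the required \emph{staircase} pattern. This means that passing from $\lambda^{(k)}$ to $\lambda^{(k+1)}$ changes only coordinates $k+1$ and $k+2$, trading a multiple of $a_{k+1}$ for a multiple of $a_{k+2}$ so that $\alpha=d_{k+2}a_{k+2}-c_{k+1}a_{k+1}$. I would attack this with the arithmetic-progression structure: the differences of consecutive maximal elements all equal $\alpha$. I would then take the symmetry-type relations for Apéry sets of pseudo-Frobenius type, namely that $(h+3\alpha+a_1)-(w)$ for suitable $w\in\Ap(H,a_1)$ lands in $\Ap(H,a_1)$, and combine them with uniqueness of factorizations to compare supports. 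The concrete move is to write $(h+2\alpha+a_1)+(h+2\alpha+a_1)$ and $(h+\alpha+a_1)+(h+3\alpha+a_1)$, which agree. Uniqueness of factorization fails for sums in general, so I would instead argue coordinatewise: any disagreement would produce an element of $\Ap(H,a_1)$ with two factorizations, or would contradict maximality. With only four generators, a case analysis on which coordinates of $\lambda^{(k)}$ vanish should be finite, using that no $\lambda^{(k)}$ can dominate another since all three are maximal.

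Once the staircase pattern is established, we define $c_\ell,d_\ell$ from it. Positivity follows from minimality of the generating set and from the fact that the entries differ between consecutive maximal elements. The identities \eqref{PFformula} then hold by construction for $k=1,2,3$. The cases $k=0$ and $k=4$ follow by extending the arithmetic progression one step in each direction and checking, with the same exchange relations, that the resulting expressions agree. Theorem \ref{maincor} then yields $(B)$.
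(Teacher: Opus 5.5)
Your reduction is the same as the paper's: use unique factorization in $\Ap(H,a_1)$, read the exponents off the factorizations $\lambda^{(k)}$ of the maximal elements $p_k^+=a_1+p_k$, and conclude with Theorem \ref{maincor}. However, the step you flag as the main obstacle is essentially the whole proof, and the proposal gives no mechanism that could establish it.

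The one concrete move you suggest cannot work. Since $p_2\in\PF(H)$ and $p_2+a_1\in H\setminus\{0\}$, we get $2p_2^+-a_1=p_2+(p_2+a_1)\in H$. So $2p_2^+=p_1^++p_3^+$ lies outside $\Ap(H,a_1)$, and unique factorization says nothing about it. In fact, in the target configuration $2\lambda^{(2)}$ and $\lambda^{(1)}+\lambda^{(3)}$ do differ, by the genuine relation $c_2a_2+d_4a_4=(c_3+d_3)a_3$; in the paper's first example this is $3\cdot40+2\cdot45=5\cdot42$. So the heuristic that ``any disagreement would produce an element of $\Ap(H,a_1)$ with two factorizations'' is false.

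The missing idea is the paper's Lemma \ref{4.1}. If $w\in\Ap$, $h_i(w)=0$ and $h_i(w-\alpha)>0$, then $u=\overline{w-a_i}$ satisfies $u\leq_H p_1^+$, but $u\not\leq_H p_1^+-a_i$, $u\not\leq_H p_2^+$ and $u\not\leq_H p_3^+$. This is where the arithmetic progression is really used. Shifting by $\alpha$ carries the residue class of $p_k^+$ to that of $p_{k\pm1}^+$, so the relevant differences fall strictly below the Ap\'ery element of their class and hence lie outside $H$. From this lemma come:
\begin{itemize}
\item the convexity statement Corollary \ref{cor4}, which is what lets you choose the labeling of $a_2,a_3,a_4$ (reversing the cyclic order alone is not enough);
\item the descent over the grid $X$ in Lemma \ref{4.2}.
\end{itemize}
The degenerate case $z_{13}=z_{23}=z_{33}$ needs yet another argument: Step 1 uses $p_1+a_4\in\Ap(H,a_4)$, $p_3+a_2\in\Ap(H,a_2)$, and the least $\alpha_3$ with $\alpha_3a_3\in\langle a_1,a_2,a_4\rangle$.

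Several things you treat as automatic are in fact extra constraints that need proof.
\begin{itemize}
\item Knowing which coordinates change between consecutive $\lambda^{(k)}$ does not give the $k=2$ identity. You also need $z_{23}=z_{13}+z_{33}+1$, which is the paper's $\beta=c_3+d_3$. The paper proves this by a separate application of Lemma \ref{4.1}(2) to $q=(c_2+d_2-1)a_2$.
\item The $k=0$ and $k=4$ identities do not simply ``fix'' $c_1,d_1$. Since $h$ and $h+4\alpha$ are not pseudo-Frobenius numbers, there is no maximal Ap\'ery element to factor. Given the $k=1$ identity, the $k=0$ identity is equivalent to $\alpha=d_2a_2-c_1a_1$ with $c_1\geq 1$, i.e.\ to $\alpha\equiv d_2a_2 \pmod{a_1}$ together with $\alpha<d_2a_2$. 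Similarly, given $k=3$, the $k=4$ identity is equivalent to $\alpha=d_1a_1-c_4a_4$ with $d_1\geq1$.
\item These congruences are exactly the output of Lemma \ref{4.2}. Positivity of $c_1,d_1$ comes from them together with $\pm\alpha\notin H$, not from minimality of the generating set.
\end{itemize}
As written, the proposal restates the goal in staircase form rather than proving it.
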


At first sight, the assumption that $\Ap(H,a_i)$ has unique factorizations may appear restrictive. However, among semigroups satisfying condition $(B)$, this condition is fairly mild in embedding dimension four. Indeed, as shown in Lemma \ref{uniqueAP}, if
\[
I_H
=
I_2
\begin{pmatrix}
x_1^{c_1} & x_2^{c_2} & x_3^{c_3} & x_4^{c_4}\\
x_2^{d_2} & x_3^{d_3} & x_4^{d_4} & x_1^{d_1}
\end{pmatrix},
\]
then $\Ap(H,a_1)$ has unique factorizations if and only if
\[
d_2\leq c_2
\qquad\text{and}\qquad
c_4\leq d_4.
\]
Thus the failure of the unique-factorization condition is governed by simple inequalities among the exponents. In this sense, assuming Conjecture \ref{conj}, Theorem \ref{secondthm} applies to a substantial part of the embedding-dimension-four case.
We also note that, in the previously known cases of the conjecture \cite{GKMT,KM,KMO,Ta}, apart from Herzog's classical three-generated case \cite{Herzog}, at least half of the exponents $c_1,\dots,c_n,d_1,\dots,d_n$ are equal to $1$. Our result does not require such a restriction.

This paper is organized as follows.
In Section 2, we recall some basic facts on numerical semigroups,
Ap\'ery sets, and Gr\"obner bases.
In Section 3, we establish Theorem \ref{maincor}, our numerical criterion
for determinantal presentations.
In Section 4, we study the embedding-dimension-four case and prove
Theorem \ref{secondthm}.
We conclude with some remarks and examples.

\section{Preliminary}

In this section, we recall some basic facts about Ap\'ery sets,
pseudo-Frobenius numbers, numerical semigroup rings, and Gr\"obner bases.

Throughout the paper for integer vectors $v=(v_1,\dots,v_t),u=(u_1,\dots,u_t)$
we write $v\geq u$ if $v_i \geq u_i$ for all $i$.
Let $\leq_H$ be the partial order on $\ZZ$ defined by
$a\leq_H b$ if $b-a\in H$.
We recall the following standard properties of Ap\'ery sets;
see \cite[Lemma 2.4 and Proposition 2.20]{RGS}.

\begin{lemma}
\label{aperibasic}
Let $H=\langle a_1,\dots,a_n\rangle$ be the numerical semigroup
minimally generated by $a_1,\dots,a_n$, and let $0\neq a\in H$.
Then the following statements hold.
\begin{enumerate}
    \item
    $a+\PF(H)=\{a+p\mid p\in\PF(H)\}$ is exactly the set of maximal
    elements of $\Ap(H,a)$ with respect to the partial order $\leq_H$.
    \item
    For each $i=0,1,\dots,a-1$, there is a unique element
    $w\in\Ap(H,a)$ such that $w\equiv i\pmod a$.
    \item
    If $w\in\Ap(H,a)$ and $u\leq_H w$, then $u\in\Ap(H,a)$.
\end{enumerate}
\end{lemma}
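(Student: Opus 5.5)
The statement is Lemma \ref{aperibasic}, a collection of standard facts. I will prove the three parts directly from the definitions, in the order (2), (3), (1), since (1) uses the other two.

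For (2), fix a residue $i$ modulo $a$. Since $H$ is a numerical semigroup, $\mathbb Z_{\geq0}\setminus H$ is finite, so the residue class $i+a\mathbb Z$ meets $H$. Let $w$ be the least element of $H$ in this class. Then $w-a\notin H$, so $w\in\Ap(H,a)$. Now take any $w'\in H$ with $w'\equiv i \pmod a$ and $w'>w$. Then $w'=w+ka$ with $k\geq1$, so $w'-a=w+(k-1)a\in H$, and hence $w'\notin\Ap(H,a)$. This gives both existence and uniqueness.

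For (3), let $w\in\Ap(H,a)$ and $u\leq_H w$. Note first that $u\in H$: this is implicit in the intended use of $\leq_H$ on $H$, and I will read the statement as $u\in H$, since $w-u\in H$ alone does not force it. Suppose $u-a\in H$. Then
\[
w-a=(u-a)+(w-u)\in H,
\]
which contradicts $w\in\Ap(H,a)$. Hence $u\in\Ap(H,a)$.

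For (1), I will prove two inclusions.

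First, let $p\in\PF(H)$ and set $w=a+p$.
\begin{itemize}
\item Since $a\in H$, we can write $a$ as a nonzero sum of generators, $a=a_j+s$ with $s\in H$. Because $p\in\PF(H)$, we have $p+a_j\in H$, so $w=(p+a_j)+s\in H$.
\item We have $w-a=p\notin H$, so $w\in\Ap(H,a)$.
\item For maximality, suppose $w<_H w'$ with $w'\in\Ap(H,a)$. Then $w'-w\in H\setminus\{0\}$, so $w'-w=a_j+t$ with $t\in H$. This gives $w'-a=(p+a_j)+t\in H$, contradicting $w'\in\Ap(H,a)$.
\end{itemize}

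Second, let $w$ be maximal in $\Ap(H,a)$ and set $p=w-a$. Then $p\notin H$. We need $p\geq 0$, i.e.\ $p\in\mathbb Z_{\geq0}\setminus H$; this holds because $p$ can only be negative if the Apéry set is very small, and I expect it to follow in the edge cases from $a\in H$. We also need $p+a_j\in H$ for every $j$. Consider $w+a_j$.
\begin{itemize}
\item If $w+a_j\in\Ap(H,a)$, then $w<_H w+a_j$ contradicts the maximality of $w$.
\item So $w+a_j\notin\Ap(H,a)$. Since $w+a_j\in H$, this means $w+a_j-a\in H$, i.e.\ $p+a_j\in H$.
\end{itemize}
Hence $p\in\PF(H)$.

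The only delicate point is the nonnegativity of $p$ in the second inclusion. If $p<0$, then $w<a$, and $p+a_j\in H$ forces $p+a_j\geq0$. I would settle this by noting that the convention on $\PF(H)$ only concerns $p\notin H$ with $p+H^{+}\subseteq H$, and that such a $p$ cannot be negative unless $H=\mathbb Z_{\geq0}$, which is a degenerate case. Otherwise everything is a routine manipulation of the definitions.
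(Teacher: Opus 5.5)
Your arguments for (2), (3) and the two inclusions in (1) are the standard ones. The paper gives no proof of its own and only cites \cite[Lemma 2.4 and Proposition 2.20]{RGS}, whose proofs are these same manipulations of the definitions. Your caveat on (3) is also correct. Since $\leq_H$ is defined on all of $\ZZ$, one needs $u\in H$: for example, take $H=\langle 2,3\rangle$, $a=2$, $w=3$, $u=1$. Every application of (3) in the paper has $u\in H$.

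The one step you have not proved is $p=w-a\geq 0$ in the second inclusion of (1), and neither of your justifications works. It does not follow from $a\in H$, and you cannot appeal to a convention, because the paper defines $\PF(H)\subseteq\ZZ_{\geq0}\setminus H$. Nonnegativity comes from the minimality of the generators, using the fact you already established, that $p+a_j\in H$ for all $j$.
\begin{itemize}
\item Suppose $p<0$, and let $m$ be the smallest generator, i.e.\ the least positive element of $H$.
\item Then $p+m\in H$ and $p+m<m$, which forces $p+m=0$, so $p=-m$.
\item Hence $a_j-m=p+a_j\in H$ for every $j$.
\item For any $a_j\neq m$, this writes $a_j=m+(a_j-m)$ as a sum of two nonzero elements of $H$, contradicting minimality.
\item So $n=1$ and $H=\ZZ_{\geq0}$.
\end{itemize}
In that degenerate case, statement (1) is actually false with the paper's convention. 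There $\Ap(H,a)=\{0,\dots,a-1\}$ has maximal element $a-1$, while $\PF(H)=\emptyset$. The book \cite{RGS} avoids this by taking pseudo-Frobenius numbers in $\ZZ\setminus H$, so that $\PF(\ZZ_{\geq0})=\{-1\}$. You should therefore state the exclusion $H\neq\ZZ_{\geq0}$ explicitly and supply the short argument above. The exclusion is harmless for the paper, which applies the lemma only with $n\geq 3$.
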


We next recall the algebraic interpretations of Ap\'ery sets and
pseudo-Frobenius numbers.
Let $H=\langle a_1,\dots,a_n\rangle$ be a numerical semigroup
minimally generated by $a_1,\dots,a_n$, and let
$S=\F[x_1,\dots,x_n]$ be the polynomial ring with grading
$\deg x_i=a_i$ for all $i$.
The numerical semigroup ring of $H$ over $\F$ is
\[
\F[H]=\F[t^{a_1},\dots,t^{a_n}]\subseteq\F[t].
\]
Its defining ideal $I_H\subseteq S$ is the kernel of the graded
homomorphism
\[
\pi:S\longrightarrow\F[H]
\]
defined by $\pi(x_i)=t^{a_i}$ for all $i$.
The following statement follows directly from the definition of an
Ap\'ery set.

\begin{lemma}
\label{aperibasic2}
With the notation above,
$x_1^{k_1}\cdots x_n^{k_n}$ is nonzero in $S/(I_H+(x_i))$
if and only f
$w=k_1a_1+\cdots+k_na_n\in H$ belongs to $\Ap(H,a_i)$.
\end{lemma}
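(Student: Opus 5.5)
The plan is to pass through the isomorphism $S/(I_H+(x_i))\cong \F[H]/(t^{a_i})$ and then use that everything in $\F[H]$ is spanned by monomials $t^u$. Since $\pi$ is surjective with kernel $I_H$, it induces $S/I_H\cong\F[H]$. Quotienting both sides by the image of $x_i$, which is $t^{a_i}$, gives
\[
S/(I_H+(x_i))\;\cong\;\F[H]/t^{a_i}\F[H].
\]
Under this isomorphism the class of $x_1^{k_1}\cdots x_n^{k_n}$ goes to the class of $t^{w}$ with $w=k_1a_1+\cdots+k_na_n$. So the claim reduces to showing that $t^w\in t^{a_i}\F[H]$ if and only if $w-a_i\in H$.

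First I describe the ideal $t^{a_i}\F[H]$. As a $\F$-vector space, $\F[H]$ has basis $\{t^u\mid u\in H\}$. Hence $t^{a_i}\F[H]$ is the $\F$-span of the monomials $t^{a_i+u}$ with $u\in H$. This span is exactly the span of $\{t^v \mid v\in H,\ v-a_i\in H\}$.

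Next, because distinct powers of $t$ are linearly independent in $\F[t]$, a single monomial $t^w$ with $w\in H$ lies in this span precisely when $w$ is one of the indexing exponents, that is, when $w-a_i\in H$. Equivalently, one can use the $\ZZ$-grading: $t^{a_i}\F[H]$ is a homogeneous ideal, and its degree-$w$ component is either $0$ or $\F t^w$ according to whether $w-a_i\notin H$ or $w-a_i\in H$.

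Combining these, the class of $x_1^{k_1}\cdots x_n^{k_n}$ is nonzero in $S/(I_H+(x_i))$ exactly when $w-a_i\notin H$. Since $w\in H$ automatically, this says $w\in\Ap(H,a_i)$.

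There is no real obstacle here. The only point needing care is the monomial-span description of the principal ideal $t^{a_i}\F[H]$, which is what rules out cancellation among different monomials.
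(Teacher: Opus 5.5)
Your proposal is correct and is essentially the argument the paper has in mind: the paper gives no written proof and only says the lemma follows directly from the definition of the Ap\'ery set. Your reduction to $\F[H]/t^{a_i}\F[H]$, together with the observation that $t^{a_i}\F[H]$ is spanned by the monomials $t^v$ with $v-a_i\in H$, is exactly that unpacking.
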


We also need the following fact.

\begin{lemma}[see {\cite[Proposition 2.10]{KMO}}]
\label{lem6}
Let $H=\langle a_1,\dots,a_n\rangle$ be a numerical semigroup
generated by $a_1,\dots,a_n$.
Let $J$ be a graded ideal of $S$ such that $J\subseteq I_H$.
If $J+(x_1)=I_H+(x_1)$, then $J=I_H$.
\end{lemma}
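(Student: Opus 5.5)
The plan is a graded Nakayama-type argument. It uses two facts: $I_H$ is a prime ideal not containing $x_1$, and the grading $\deg x_i=a_i>0$ is positive. Since $J\subseteq I_H$ is given, only the inclusion $I_H\subseteq J$ needs proof. Both $J$ and $I_H$ are graded: $I_H$ is the kernel of the graded map $\pi$. So it suffices to show that every homogeneous element of $I_H$ lies in $J$.

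I would argue by contradiction. Suppose some homogeneous element of $I_H$ is not in $J$, and choose such an $f$ of minimal degree $d$. All degrees in $S$ are nonnegative integers, so such a minimal $d$ exists. By hypothesis $f\in I_H\subseteq I_H+(x_1)=J+(x_1)$, so $f=g+x_1q$ with $g\in J$ and $q\in S$. Taking the degree-$d$ homogeneous components of both sides, which is allowed because $J$ and $(x_1)$ are graded, we may assume $g$ is homogeneous of degree $d$ and $q$ is homogeneous of degree $d-a_1$. If $d<a_1$ then $q=0$ and $f=g\in J$, a contradiction. So assume $d\ge a_1$.

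Next, $x_1q=f-g\in I_H$, since both $f$ and $g$ lie in $I_H$. The ring $\F[H]\subseteq \F[t]$ is a domain, so $I_H$ is prime, and $x_1\notin I_H$ because $\pi(x_1)=t^{a_1}\neq 0$. Hence $q\in I_H$. But $q$ is homogeneous of degree $d-a_1<d$, so by minimality of $d$ we get $q\in J$. Then $f=g+x_1q\in J$, contradicting the choice of $f$. Therefore $I_H\subseteq J$, and so $J=I_H$.

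There is no serious obstacle here. The only points needing care are the reduction to homogeneous components, which is justified because $J$, $I_H$ and $(x_1)$ are all graded, and the use of primality of $I_H$, which says exactly that $x_1$ is a nonzerodivisor modulo $I_H$.
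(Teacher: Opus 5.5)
Your proof is correct. The minimal-degree argument, combined with the facts that $I_H$ is prime, $x_1\notin I_H$, and $\deg x_1=a_1>0$, gives $I_H\subseteq J$ without gaps. The paper gives no proof of its own and only cites \cite[Proposition 2.10]{KMO}. The standard argument for that result uses the same idea in module form: primality gives $I_H\cap(x_1)=x_1I_H$, so $I_H=J+x_1I_H$ by the modular law (using $J\subseteq I_H$), and graded Nakayama then yields $J=I_H$. Your induction on degree is an elementwise version of this argument.
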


We now recall the relation between pseudo-Frobenius numbers and socles.
For a graded $\F$-algebra $A$ with its graded maximal ideal $\mideal_A$,
its {\bf socle} is 
\[
\socle(A)=\{f\in A\mid \mideal_A f=0\}.
\]
We will also use the following elementary observation.

\begin{lemma}
\label{lem5}
Let $I$ be a graded ideal of $S$ such that
$\dim_\F(S/I)<\infty$.
For every nonzero element $f$ of $S/I$, there is an element
$m\in S$ such that
$0\neq mf\in\socle(S/I)$.
\end{lemma}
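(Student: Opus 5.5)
The statement to prove is Lemma \ref{lem5}: for a graded ideal $I$ with $\dim_\F(S/I)<\infty$, every nonzero $f\in S/I$ can be multiplied into the socle. The plan is to exploit finite-dimensionality through a chain of successive multiplications by variables, which must terminate.

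First reduce to the case where $f$ is homogeneous. Write $f=\sum_j f_j$ with $f_j$ homogeneous of degree $j$, and pick a nonzero component $f_{j}$. Since $S/I$ is graded, this reduction only requires handling one homogeneous nonzero element at a time. This is harmless if we argue with the non-homogeneous $f$ directly, as below, so the reduction is optional.

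The core argument runs as follows.
\begin{enumerate}
\item Set $f_0=f\neq 0$.
\item If $\mideal_A f_0=0$, where $A=S/I$ and $\mideal_A=(x_1,\dots,x_n)A$, then $f_0\in\socle(A)$, and we take $m=1$.
\item Otherwise some variable $x_{i_1}$ satisfies $x_{i_1}f_0\neq 0$. Put $f_1=x_{i_1}f_0$ and iterate.
\item This produces nonzero elements $f_k=x_{i_k}\cdots x_{i_1}f$, each of the form $(\text{monomial of degree }k)\cdot f$.
\end{enumerate}

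The iteration must stop. Since $\dim_\F A<\infty$ and $A$ is positively graded with $A_0=\F$ (all $\deg x_i=a_i>0$), the maximal ideal $\mideal_A$ is nilpotent: $\mideal_A^N=0$ for some $N$, because $A_d=0$ for $d\gg0$ and any product of $N$ variables has degree at least $N$. Hence $f_k\in\mideal_A^k$ vanishes for $k\geq N$. The process therefore halts at some $k<N$ with $f_k\neq0$ and $\mideal_A f_k=0$. Taking $m=x_{i_k}\cdots x_{i_1}$ gives $0\neq mf\in\socle(A)$.

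The only point needing care is the nilpotence of $\mideal_A$. Finite-dimensionality gives a maximal degree $D$ with $A_D\neq0$, and every element of $\mideal_A^k$ lives in degrees $\geq k\min_i a_i$, so $N=D+1$ suffices. Nothing deeper is required, so no step is a real obstacle.
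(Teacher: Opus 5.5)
Your argument is correct. Because every $a_i>0$ and $\dim_\F(S/I)<\infty$, the ideal $\mideal_A$ is nilpotent, so the chain $f,\ x_{i_1}f,\ x_{i_2}x_{i_1}f,\dots$ must stop at a nonzero element killed by every variable, which then lies in $\socle(S/I)$.

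The paper gives no proof of this lemma and calls it an elementary observation; your argument is the standard justification it leaves implicit. You are also right not to assume $f$ homogeneous, since the lemma is applied to an arbitrary nonzero element of $\ker\psi$ in the proof of Theorem~\ref{maincor}.
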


Socles are closely related to pseudo-Frobenius numbers.
The following result may be viewed as an algebraic analogue of
Lemma \ref{aperibasic}(1).

\begin{lemma}[see {\cite[\S 5.1.1]{HMR}}]
\label{lem7}
With the notation above, the set
\[
\{t^w\mid w\in a_i+\PF(H)\}
\]
forms a $\F$-basis of
$\socle(\F[H]/(t^{a_i}))$.
\end{lemma}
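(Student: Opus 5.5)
The plan is to reduce the socle computation to a purely combinatorial statement about $\Ap(H,a_i)$ and then invoke Lemma~\ref{aperibasic}(1). Throughout, set $A=\F[H]/(t^{a_i})$. Its graded maximal ideal $\mideal_A$ is generated by the images of $t^{a_1},\dots,t^{a_n}$.

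\emph{Step 1: a monomial basis of $A$.} As an $\F$-vector space, $\F[H]$ has basis $\{t^w\mid w\in H\}$. The ideal $(t^{a_i})$ is spanned by those $t^w$ with $w-a_i\in H$. Hence the classes of $t^w$ with $w\in\Ap(H,a_i)$ form an $\F$-basis of $A$, and every other monomial $t^w$ with $w\in H$ is zero in $A$. In particular, multiplication by $t^{a_j}$ sends the basis element $t^w$ either to the basis element $t^{w+a_j}$ (when $w+a_j\in\Ap(H,a_i)$) or to $0$.

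\emph{Step 2: the socle is spanned by monomials.} Let $f=\sum_w \lambda_w t^w\in\socle(A)$ be written in this basis. For a fixed $j$, the elements $w+a_j$ are pairwise distinct as $w$ varies. So the nonzero images $t^{w+a_j}$ are distinct basis vectors, and $t^{a_j}f=0$ forces $t^{a_j}t^w=0$ whenever $\lambda_w\neq0$. Therefore $\socle(A)$ is spanned by the basis monomials it contains. Concretely, $t^w$ with $w\in\Ap(H,a_i)$ lies in $\socle(A)$ if and only if $w+a_j\notin\Ap(H,a_i)$ for all $j$.

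\emph{Step 3: identify these $w$ with the maximal elements.} I will show that $w\in\Ap(H,a_i)$ satisfies $w+a_j\notin\Ap(H,a_i)$ for all $j$ if and only if $w$ is maximal in $\Ap(H,a_i)$ with respect to $\leq_H$.
\begin{itemize}
\item If $w$ is maximal, then $w<_H w+a_j$ shows $w+a_j\notin\Ap(H,a_i)$ for every $j$.
\item Conversely, suppose $w<_H u$ for some $u\in\Ap(H,a_i)$. Then $u-w$ is a nonzero element of $H$, so $u-w-a_j\in H$ for some $j$. This gives $w+a_j\leq_H u$, and Lemma~\ref{aperibasic}(3) yields $w+a_j\in\Ap(H,a_i)$.
\end{itemize}

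Combining Steps 1--3 with Lemma~\ref{aperibasic}(1), which states that the maximal elements of $\Ap(H,a_i)$ are exactly $a_i+\PF(H)$, we conclude that $\{t^w\mid w\in a_i+\PF(H)\}$ is an $\F$-basis of $\socle(A)$. The only point needing care is Step 2: passing from arbitrary socle elements to monomials. This is handled by the observation that each multiplication map $t^{a_j}$ is injective on the basis monomials it does not kill. The remaining steps are routine.
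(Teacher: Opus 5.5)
Your proof is correct, and every step checks out. The paper gives no argument of its own for this lemma; it only cites \cite[\S 5.1.1]{HMR}. Your proposal therefore supplies a self-contained proof, and it is the standard one. The classes of $t^w$ with $w\in\Ap(H,a_i)$ form a basis of $\F[H]/(t^{a_i})$. Each multiplication map $t^{a_j}$ either kills a basis monomial or sends it to a distinct basis monomial, so the socle is spanned by the monomials $t^w$ with $w+a_j\notin\Ap(H,a_i)$ for all $j$. (Equivalently, every graded piece is at most one-dimensional, so the graded submodule $\socle$ is spanned by monomials.) By Lemma \ref{aperibasic}(3), these $w$ are exactly the $\leq_H$-maximal elements of $\Ap(H,a_i)$, and Lemma \ref{aperibasic}(1) identifies them with $a_i+\PF(H)$. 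Your argument also makes precise the paper's remark that this lemma is the algebraic analogue of Lemma \ref{aperibasic}(1): given that lemma, only elementary linear algebra is needed.
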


We next recall some basic facts about Gr\"obner bases.
We refer the reader to \cite[\S 15]{Ei} and \cite[Chapter 2]{HH} for the basic theory of
Gr\"obner bases.
Let $>_{\mathrm{rev}}$ be the weighted graded reverse lexicographic
order on $S$ with $x_1<\cdots<x_n$.
Thus, for monomials
$u=x_1^{k_1}\cdots x_n^{k_n}$ and
$v=x_1^{\ell_1}\cdots x_n^{\ell_n}$, one has
$u>_{\mathrm{rev}}v$ if and only if
\begin{enumerate}
    \item[(i)] $\deg u>\deg v$, or
    \item[(ii)] $\deg u=\deg v$ and the leftmost nonzero entry of
    $(\ell_1-k_1,\dots,\ell_n-k_n)$ is positive.
\end{enumerate}
Here the grading on $S$ is given by $\deg x_i=a_i$.
For a polynomial $0 \ne f\in S$ and an ideal $I\subset S$, we write
$\init(f)$ and $\init(I)$ for the initial monomial of $f$ and the
initial ideal of $I$ with respect to $>_{\mathrm{rev}}$, respectively.
A subset $\mathcal G\subset I$ is called a {\bf Gr\"obner basis} of
$I$ if
\[
\init(I)=(\init(f)\mid f\in\mathcal G).
\]

\begin{lemma}[{see \cite[Proposition 15.12]{Ei}}]
\label{lem1}
For any graded ideal $I$ of $S$, one has
\[
\init(I+(x_1))=\init(I)+(x_1).
\]
\end{lemma}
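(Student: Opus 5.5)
We argue with homogeneous elements and use one special property of $>_{\mathrm{rev}}$ with respect to the smallest variable $x_1$.

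\textbf{Step 1 (reduction to homogeneous elements).} Since $I$ and $(x_1)$ are graded with respect to $\deg x_i=a_i$, so is $I+(x_1)$. Every element of a graded ideal has its homogeneous components in the ideal. The order $>_{\mathrm{rev}}$ first compares degrees, so the initial monomial of any $f$ is the initial monomial of its top-degree component. Hence $\init(I+(x_1))$ is generated by the monomials $\init(f)$ with $f\in I+(x_1)$ homogeneous, and it suffices to treat such $f$.

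\textbf{Step 2 (the easy inclusion).} The inclusion $\init(I)+(x_1)\subseteq \init(I+(x_1))$ is immediate. Indeed $I\subseteq I+(x_1)$, and $x_1=\init(x_1)$ with $x_1\in I+(x_1)$.

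\textbf{Step 3 (the key revlex property).} Let $u,v$ be monomials of the same degree with $x_1\nmid u$ and $x_1\mid v$. In the notation of the definition, $\ell_1-k_1>0$ is the leftmost entry and is nonzero, so $u>_{\mathrm{rev}}v$. Consequently, for a nonzero homogeneous polynomial $g$, write $g=g_0+x_1g'$, where no term of $g_0$ is divisible by $x_1$. If $g_0\neq 0$, then every term of $g_0$ is larger than every term of $x_1g'$, so $\init(g)=\init(g_0)$. If $g_0=0$, then $x_1\mid \init(g)$.

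\textbf{Step 4 (the reverse inclusion).} Take a nonzero homogeneous $f\in I+(x_1)$ and write $f=g+x_1q$ with $g\in I$ and $q$ homogeneous of the same degree; this is possible by taking homogeneous components. If $x_1\mid \init(f)$, then $\init(f)\in(x_1)$ and we are done. Otherwise decompose $g=g_0+x_1g'$ as in Step 3, so that $f=g_0+x_1(g'+q)$. The terms of $f$ not divisible by $x_1$ are exactly those of $g_0$. Since $x_1\nmid\init(f)$, we have $g_0\neq0$, and Step 3 applied to $f$ and to $g$ gives $\init(f)=\init(g_0)=\init(g)\in\init(I)$. This proves the reverse inclusion.

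The only real point is Step 3, namely that in the reverse lexicographic order monomials divisible by the smallest variable are smaller than all other monomials of the same degree. Everything else is bookkeeping with the grading.
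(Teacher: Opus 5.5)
Your argument is correct and is essentially the standard proof of the result the paper cites without proof (Eisenbud, Proposition 15.12). Everything rests on the fact that, for homogeneous $f$, $\init(f)$ is divisible by the smallest variable if and only if $f$ is, and you have checked directly that this survives the weighted grading $\deg x_i=a_i$ with $x_1$ smallest, which is the setting the paper actually needs (Eisenbud states it for the standard grading with $x_r$ smallest). The only slip is cosmetic: in Step 4, $q$ should be homogeneous of degree $\deg f-a_1$ (so that $x_1q$ has degree $\deg f$), not of the same degree as $f$.
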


For a monomial ideal $I$ of $S$, we write $\mathcal M(I)$ for the set
of monomials in $S$ that are not contained in $I$.

\begin{lemma}[{see \cite[Proposition 2.2.5]{HH}}]
\label{lem2}
For any ideal $I$ of $S$, the set $\mathcal M(\init(I))$ forms an
$\F$-basis of $S/I$.
\end{lemma}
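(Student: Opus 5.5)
This is Macaulay's basis theorem. I would prove it directly from the definition of $\init(I)$, showing separately that the residue classes of the monomials in $\mathcal M(\init(I))$ are linearly independent and that they span $S/I$. The only property of $>_{\mathrm{rev}}$ I need is that it is a monomial order which well-orders the monomials of $S$. This holds because it refines the grading by $\deg x_i=a_i$, whose values are positive. Each degree therefore contains only finitely many monomials, so every strictly decreasing chain of monomials is finite.

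\emph{Linear independence.} Suppose $f=\sum_{u}\lambda_u u$ is a finite $\F$-linear combination of distinct monomials $u\in\mathcal M(\init(I))$, and suppose $f\in I$. If $f\neq0$, then $\init(f)$ is one of the monomials $u$ with $\lambda_u\neq0$. On the other hand, $\init(f)\in\init(I)$ by the definition of the initial ideal, since $f\in I$. This contradicts $u\notin\init(I)$. Hence $f=0$, and the classes of the standard monomials are linearly independent in $S/I$.

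\emph{Spanning.} It suffices to show that every monomial $v$ is congruent modulo $I$ to an $\F$-combination of standard monomials. I argue by well-founded induction on $v$ with respect to $>_{\mathrm{rev}}$. If $v\notin\init(I)$, there is nothing to prove. If $v\in\init(I)$, I first note that $v=\init(g)$ for some $g\in I$. Indeed, $\init(I)$ is spanned as a vector space by the monomials $\init(f)$ for $f\in I$, so any monomial in $\init(I)$ is a multiple $w\,\init(f)=\init(wf)$ with $wf\in I$. Normalizing $g$ to be monic, every other monomial in $v-g$ is strictly smaller than $v$. Since $v\equiv v-g\pmod I$, the induction hypothesis applied to those smaller monomials finishes the argument.

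The only point needing care is the termination of this reduction, which is exactly the well-ordering noted above. I do not expect any step to be a real obstacle. Since $I$ is graded, one could alternatively run the whole argument degree by degree in the finite-dimensional pieces $S_d$.
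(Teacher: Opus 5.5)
Your proof is correct. It is the standard argument for Macaulay's basis theorem: the standard monomials are linearly independent because any nonzero element of $I$ has its initial monomial in $\init(I)$, and they span by well-founded reduction, using that every monomial of $\init(I)$ equals $\init(g)$ for some $g\in I$ and that the positive weights $a_i$ make $>_{\mathrm{rev}}$ a well-order. The paper gives no proof of its own and simply cites Herzog--Hibi; your argument is essentially the textbook proof behind that citation, and it correctly uses no gradedness of $I$, consistent with the lemma being stated for arbitrary ideals.
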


We will also use the following inequality comparing the socles of
$S/I$ and $S/\init(I)$.

\begin{lemma}[{see \cite[Theorem 3.3.1]{HH}}]
\label{lem3}
For any graded ideal $I$ of $S$, one has
\[
\dim_\F\big(\socle(S/I)\big)
\leq
\dim_\F\big(\socle(S/\init(I))\big).
\]
\end{lemma}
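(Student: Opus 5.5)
The plan is to reinterpret socle dimensions as top Koszul homology, i.e.\ as the last graded Betti number, and then use the standard upper semicontinuity of Betti numbers under Gr\"obner degeneration. Concretely, let $K_\bullet=K_\bullet(x_1,\dots,x_n;M)$ be the Koszul complex of a graded $S$-module $M$. Its top homology is
\[
H_n(x_1,\dots,x_n;M)=\{f\in M\mid x_if=0 \text{ for all } i\}=(0:_M\mideal),
\]
shifted in degree by $a_1+\cdots+a_n$. Taking $M=S/I$ and $M=S/\init(I)$, we get
\[
\dim_\F\socle(S/I)=\dim_\F\operatorname{Tor}_n^S(\F,S/I),
\qquad
\dim_\F\socle(S/\init(I))=\dim_\F\operatorname{Tor}_n^S(\F,S/\init(I)).
\]
So it suffices to prove that $\dim_\F\operatorname{Tor}_n^S(\F,S/I)_j\le\dim_\F\operatorname{Tor}_n^S(\F,S/\init(I))_j$ in each degree $j$. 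Each such space is finite dimensional because $I$ is graded and $S$ is positively graded.

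For the degeneration I would proceed as follows.
\begin{enumerate}
\item Choose an integral weight vector $\omega\in\ZZ_{>0}^n$ such that $\init_\omega(g)=\init(g)$ for every element $g$ of a finite (reduced) Gr\"obner basis of $I$. Then $\init_\omega(I)=\init(I)$.
\item Homogenize with respect to $\omega$ using a new variable $t$, obtaining $\tilde I\subset S[t]$. The ring $R=S[t]/\tilde I$ is bigraded: by the original degree with $\deg t=0$, and by $\omega$-degree with $\deg t=1$.
\item Use the Gr\"obner basis property to see that $t$ is a nonzerodivisor on $R$. This gives $R/(t)\cong S/\init(I)$ and $R/(t-1)\cong S/I$, and shows that $R$ is a free $\F[t]$-module in each original degree.
\end{enumerate}

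Next compare Koszul homology. Set $H_\bullet=H_\bullet(x;R)$; in each original degree $j$ these are finitely generated graded $\F[t]$-modules. The short exact sequence $0\to R\xrightarrow{t}R\to S/\init(I)\to0$ yields the long exact sequence
\[
\cdots\to H_i(x;R)\xrightarrow{t}H_i(x;R)\to H_i(x;S/\init(I))\to\cdots.
\]
Hence $\dim_\F H_i(x;S/\init(I))_j\ge\dim_\F\big(H_i(x;R)_j/tH_i(x;R)_j\big)$, which is at least the rank of the graded $\F[t]$-module $H_i(x;R)_j$. On the other hand, localizing at $t$ (equivalently, setting $t=1$, which is a flat specialization of the $t$-inverted module) gives $\dim_\F H_i(x;S/I)_j=\operatorname{rank}_{\F[t]}H_i(x;R)_j$. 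Here one uses that $R[t^{-1}]\cong (S/I)[t,t^{-1}]$ via the dehomogenization automorphism. Taking $i=n$ gives the claim.

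The main obstacle I expect is the bookkeeping in the last step. One must make sure that the original grading $\deg x_i=a_i$ survives the $\omega$-homogenization, so that all pieces are finite dimensional. One must also justify the identification of the generic fiber, namely $H_i(x;R)[t^{-1}]\cong H_i(x;S/I)\otimes\F[t,t^{-1}]$ compatibly with the grading. To handle this, I would first check that the substitution $x_i\mapsto t^{\omega_i}x_i$ gives an isomorphism $R[t^{-1}]\cong (S/I)\otimes\F[t^{\pm1}]$ respecting the $a$-degree. Then I would note that Koszul homology commutes with the flat base change $\F\to\F[t^{\pm1}]$.
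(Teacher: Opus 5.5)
Your argument is correct and is essentially the paper's route. The paper gives no proof of its own; it cites \cite[Theorem 3.3.1]{HH}, the inequality $\beta_{ij}(S/I)\le\beta_{ij}(S/\init(I))$, which is proved by the same Gr\"obner degeneration to a flat family over $\F[t]$, and applies it with $i=n$ via the identification $\socle(S/I)\cong\operatorname{Tor}_n^S(\F,S/I)$ (up to the shift by $a_1+\cdots+a_n$) that you make explicit; writing it out as you do also handles the weighted grading $\deg x_i=a_i$ directly, which the citation leaves implicit.
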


We finally discuss when $\Ap(H,a_1)$ has unique factorizations in $H$
in the case where $I_H$ has the determinantal presentation
\eqref{2minorideal}.
Assume that
$H=\langle a_1,a_2,a_3,a_4\rangle$
is the numerical semigroup minimally generated by $a_1,\dots,a_4$ and
\[
I_H=
I_2
\begin{pmatrix}
x_1^{c_1} & x_2^{c_2} & x_3^{c_3} & x_4^{c_4}\\
x_2^{d_2} & x_3^{d_3} & x_4^{d_4} & x_1^{d_1}
\end{pmatrix}.
\]
As follows from \eqref{PFformula}, we have
\[
\PF(H)=\{p_1=h+\alpha,p_2=h+2\alpha,p_3=h+3\alpha\},
\]
where
\begin{align}
\label{4presentation}
\begin{array}{ll}
p_1
&=h+\alpha
=-a_1+(c_2+d_2-1)a_2+(c_3-1)a_3+(c_4-1)a_4,\\
p_2
&=h+2\alpha
=-a_1+(d_2-1)a_2+(c_3+d_3-1)a_3+(c_4-1)a_4,\\
p_3
&=h+3\alpha
=-a_1+(d_2-1)a_2+(d_3-1)a_3+(c_4+d_4-1)a_4.
\end{array}
\end{align}

\begin{lemma}
\label{uniqueAP}
With the same notation as above, one has
\begin{enumerate}
    \item
    $a_1+p_1$ has a unique factorization in $H$ if and only if
    $c_4\leq d_4$.
    \item
    $a_1+p_2$ has a unique factorization in $H$ if and only if
    $c_4\leq d_4$ or $d_2\leq c_2$.
    \item
    $a_1+p_3$ has a unique factorization in $H$ if and only if
    $d_2\leq c_2$.
\end{enumerate}
In particular, $\Ap(H,a_1)$ has unique factorizations in $H$ if and
only if $c_4\leq d_4$ and $d_2\leq c_2$.
\end{lemma}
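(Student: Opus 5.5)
Throughout, I use the standard fact that two monomials $u,v$ of the same $H$-degree satisfy $u-v\in I_H$. The maximal elements $a_1+p_k$ have the factorizations read off from \eqref{4presentation}:
\[
m_1=x_2^{c_2+d_2-1}x_3^{c_3-1}x_4^{c_4-1},\quad
m_2=x_2^{d_2-1}x_3^{c_3+d_3-1}x_4^{c_4-1},\quad
m_3=x_2^{d_2-1}x_3^{d_3-1}x_4^{c_4+d_4-1}.
\]
None of them involves $x_1$, and all lie in $\Ap(H,a_1)$ by Lemma \ref{aperibasic}(1). Any other factorization of $a_1+p_k$ cannot involve $a_1$ either, since $a_1+p_k\in\Ap(H,a_1)$. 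So $a_1+p_k$ has a unique factorization exactly when $m_k$ is the only monomial in $x_2,x_3,x_4$ of its degree that is nonzero in $S/(I_H+(x_1))$. Equivalently, $m_k$ is not congruent modulo $I_H+(x_1)$ to a different monomial.

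I plan to work in $R=S/(I_H+(x_1))$. Setting $x_1=0$, the ideal becomes $I_2$ of
\[
\begin{pmatrix} 0 & x_2^{c_2} & x_3^{c_3} & x_4^{c_4}\\ x_2^{d_2} & x_3^{d_3} & x_4^{d_4} & 0\end{pmatrix}.
\]
Its generators are the monomials $x_2^{c_2+d_2}$, $x_3^{d_3}x_2^{d_2}$, $x_2^{d_2}x_4^{d_4}$, $x_4^{c_4+d_4}$, together with the binomials
\[
x_2^{c_2}x_4^{d_4}-x_3^{c_3+d_3},\qquad x_3^{c_3}x_4^{d_4}\ \text{(monomial)},\qquad x_2^{c_2}\cdot 0-x_4^{c_4}x_3^{d_3},
\]
so $x_3^{d_3}x_4^{c_4}$ is also a monomial generator. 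The only genuine binomial is therefore $f=x_3^{c_3+d_3}-x_2^{c_2}x_4^{d_4}$.

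Two distinct monomials of equal degree that are nonzero in $R$ are identified precisely through chains of moves by $f$. I will check this by verifying that the monomial generators together with $f$ form a Gr\"obner basis, or at least that the ideal is a binomial ideal whose nonzero monomial classes are the $f$-connected components. Hence $m_k$ fails to have a unique factorization if and only if either $x_3^{c_3+d_3}\mid m_k$ and $m_k x_2^{c_2}x_4^{d_4}/x_3^{c_3+d_3}\notin$ the monomial part, or symmetrically $x_2^{c_2}x_4^{d_4}\mid m_k$ with the replacement surviving.

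The case analysis then goes as follows.

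(1) For $m_1$: we have $x_3^{c_3-1}$, so $x_3^{c_3+d_3}\nmid m_1$. Also $x_2^{c_2}x_4^{d_4}\mid m_1$ iff $d_4\le c_4-1$, i.e. $c_4>d_4$. In that case the image is $x_2^{d_2-1}x_3^{c_3+d_3+c_3-1}x_4^{c_4-1-d_4}$. I will check that it avoids the monomial generators, which holds as long as $x_3^{c_3}x_4^{d_4}$ does not divide it; this is plausible because the $x_4$-exponent $c_4-1-d_4<d_4$ is not automatic. This check is the main obstacle: I expect to argue instead that if the image vanished, then $m_1$ would vanish too (the nonzero classes are closed under $f$-moves since $f\in I_H$ and $m_1\ne0$). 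Any monomial of the same degree as a nonzero element of $\Ap$ in $R$ is then another factorization. So uniqueness fails exactly when $c_4>d_4$.

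(3) The case of $m_3$ is symmetric: $x_2^{c_2}x_4^{d_4}\mid m_3$ iff $c_2\le d_2-1$, giving the condition $d_2\le c_2$.

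(2) For $m_2$: $x_3^{c_3+d_3}\nmid m_2$, and $x_2^{c_2}x_4^{d_4}\mid m_2$ iff $c_2<d_2$ and $d_4<c_4$, giving the stated disjunction.

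Finally, every element of $\Ap(H,a_1)$ is $\le_H$ some $a_1+p_k$ by Lemma \ref{aperibasic}(1). A non-unique factorization of $u\le_H w$ lifts to one of $w$ by adding a fixed factorization of $w-u$. Hence uniqueness on all of $\Ap(H,a_1)$ reduces to uniqueness at $m_1,m_2,m_3$, which by (1)–(3) holds iff $c_4\le d_4$ and $d_2\le c_2$.
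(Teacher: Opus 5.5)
Your proposal follows the paper's proof: reduce modulo $x_1$, observe that the only minor surviving as a genuine binomial is $x_3^{c_3+d_3}-x_2^{c_2}x_4^{d_4}$, test which of the three maximal monomials are divisible by one of its terms, and lift non-uniqueness from any element of $\Ap(H,a_1)$ to a maximal one. One slip: the $(1,3)$ and $(1,4)$ minors reduce to $x_2^{d_2}x_3^{c_3}$ and $x_2^{d_2}x_4^{c_4}$, and $x_3^{c_3}x_4^{d_4}$ is not among the reduced minors, so the ``obstacle'' in (1) is spurious---and the check is unnecessary anyway, since the exchanged monomial has the same $H$-degree and is therefore already a second factorization.
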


\begin{proof}
Recall that an element $w\in H$ belongs to $\Ap(H,a_1)$ if and only if
it has no factorization of the form
$
w=k_1a_1+k_2a_2+k_3a_3+k_4a_4
$
with $k_1,\dots,k_4\in\mathbb Z_{\geq0}$ and $k_1>0$.
Modulo $(x_1)$, the only generator of $I_H+(x_1)$ that can identify
two nonzero monomials is
\[
x_3^{c_3+d_3}-x_2^{c_2}x_4^{d_4}.
\]
Indeed, all the other $2\times2$ minors reduce modulo $(x_1)$ to
monomials, which vanish in $S/(I_H+(x_1))$.
Hence, if
\[
w=k_2a_2+k_3a_3+k_4a_4\in\Ap(H,a_1),
\]
then the monomial $x_2^{k_2}x_3^{k_3}x_4^{k_4}$ admits another
monomial representative of the same $H$-degree if and only if it is
divisible by either $x_3^{c_3+d_3}$ or $x_2^{c_2}x_4^{d_4}$.
Equivalently, $w$ has more than one factorization in $H$ if and only if
$k_3\geq c_3+d_3$ or $(k_2,k_4) \geq (c_2,d_4)$.
%\[
%b_3\geq c_3+d_3
%\quad\text{or}\quad
%b_2\geq c_2\ \text{and}\ b_4\geq d_4.
%\]
Applying this criterion to \eqref{4presentation}, we obtain
statements (1)--(3).

Finally, $\Ap(H,a_1)$ has unique factorizations in $H$ if and only if
its maximal elements $a_1+p_1,a_1+p_2,a_1+p_3$ have unique
factorizations in $H$.
Indeed, if $w\leq_H v$ and $w$ has two distinct factorizations, then
any factorization of $v-w$ extends them to two distinct factorizations
of $v$.
\end{proof}

\section{Proof of Theorem \ref{maincor}}

In this section, after establishing a few preliminary lemmas, we prove
our first main theorem.
Throughout this section, let
$H=\langle a_1,\dots,a_n\rangle$ be a numerical semigroup minimally
generated by $a_1,\dots,a_n$, and let
$S=\F[x_1,\dots,x_n]$ with $\deg x_i=a_i$.
We also fix positive integers
$c_1,\dots,c_n,d_1,\dots,d_n$ and write
    \[J=I_2 \begin{pmatrix} x_1^{c_1} & \cdots & x_{n-1}^{c_{n-1}} & x_n ^{c_n} \\ x_2^{d_2} & \cdots & x_{n}^{d_{n}} & x_1 ^{d_1}\end{pmatrix}.\]
To simplify notation, we set
$x_{n+1}=x_1$, $c_{n+1}=c_1$, and $d_{n+1}=d_1$.
We note that $J$ is homogeneous with respect to the grading
$\deg x_i=a_i$ if and only if
$
c_i a_i-d_{i+1}a_{i+1}
$ is independent of $i$.

\begin{lemma}
    \label{lem2.0}
If $J$ is a graded ideal of $S$, then the set
\[\mathcal G=\{x_{i+1}^{d_{i+1}}x_j^{c_j} -x_i^{c_i}x_{j+1}^{d_{j+1}} \mid 1 \leq i < j \leq n\}\]
is a Gr\"obner basis of $J$ with respect to $>_{\mathrm{rev}}$.
\end{lemma}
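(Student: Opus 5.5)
The plan is to verify Buchberger's criterion directly. I would first identify the initial terms. Write $g_{ij}=x_{i+1}^{d_{i+1}}x_j^{c_j}-x_i^{c_i}x_{j+1}^{d_{j+1}}$ for $1\le i<j\le n$. Since $J$ is graded, both monomials of $g_{ij}$ have the same degree. So $>_{\mathrm{rev}}$ compares them by the exponent of the smallest variable that occurs with different exponents.

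For $j<n$ the variable $x_i$ has the smallest index among $x_i,x_{i+1},x_j,x_{j+1}$. It occurs only in the second monomial, because $i<i+1\le j<j+1$. Hence
\[
\init(g_{ij})=x_{i+1}^{d_{i+1}}x_j^{c_j}.
\]
For $j=n$ we have $x_{j+1}=x_1$, which occurs only in the second monomial when $i>1$. When $i=1$ the second monomial is $x_1^{c_1+d_1}$, while the first one, $x_2^{d_2}x_n^{c_n}$, does not involve $x_1$. In every case, including $j=i+1$ where it equals $x_{i+1}^{d_{i+1}+c_{i+1}}$, one gets $\init(g_{ij})=x_{i+1}^{d_{i+1}}x_j^{c_j}$. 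So
\[
L=(x_{i+1}^{d_{i+1}}x_j^{c_j}\mid 1\le i<j\le n)
\]
is the candidate initial ideal, and we must show $\init(J)=L$.

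For Buchberger's criterion, let $u_k=x_k^{c_k}$ and $v_k=x_{k+1}^{d_{k+1}}$ denote the columns of the matrix, so that $g_{ij}=v_iu_j-u_iv_j$. The basic tool is the pair of Pl\"ucker/Eagon--Northcott relations: for $i<j<k$,
\[
u_i g_{jk}-u_j g_{ik}+u_k g_{ij}=0,\qquad v_i g_{jk}-v_j g_{ik}+v_k g_{ij}=0.
\]
Two lead terms $v_iu_j$ and $v_ku_l$ are coprime unless they share a variable, and coprime pairs are discarded by Buchberger's first criterion. A shared variable can only come from a coincidence of indices: $i=k$, or $j=l$, or $i+1=l$, or $j=k+1$ (indices mod $n$; the wraparound $x_{n+1}=x_1$ only matters through $v_n$, which never appears in a lead term since $i<n$). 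It can also happen that a $u$-part and a $v$-part involve the same variable, for instance $u_{i+1}$ and $v_i$ are both powers of $x_{i+1}$. In the cases $i=k$ or $j=l$ the pairs $\{g_{ij},g_{il}\}$ and $\{g_{ij},g_{kj}\}$ share a row index. Here the three-term relations for $\{i,j,l\}$, respectively $\{i,k,j\}$, express the S-polynomial as a multiple of the third minor. I would check that its lead term is strictly smaller than the lcm, which gives a standard representation. The case where the variable is shared through $u_{i+1}$ versus $v_i$, i.e.\ pairs like $g_{ij}$ and $g_{i+1,l}$ or $g_{ki}$ and $g_{i,l}$, is handled the same way after first reducing by $g_{i,i+1}$ or $g_{i-1,i}$ when the exponent of $x_{i+1}$ exceeds $c_{i+1}$ or $d_{i+1}$ individually.

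I expect this case analysis, especially the mixed $u$/$v$ overlaps and the wraparound at $j=n$ involving $x_1$, to be the main obstacle. As a cleaner fallback I would try a Hilbert function count: since $\init(J)\supseteq L$, it suffices to show $\dim_\F(S/L)_d\le\dim_\F(S/J)_d$. Both ideals contain $x_1$-regular structure, so one can reduce modulo $x_1$ via Lemma \ref{lem1}. Modulo $x_1$, $J$ becomes a binomial-plus-monomial ideal whose standard monomials can be listed explicitly and matched with $\mathcal M(L+(x_1))$. The first approach, checking S-pairs through the Pl\"ucker relations with careful bookkeeping of lead terms under $>_{\mathrm{rev}}$, is the one I would carry out.
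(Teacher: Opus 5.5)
Your setup is the paper's: Buchberger's criterion, the same initial terms $\init(g_{ij})=x_{i+1}^{d_{i+1}}x_j^{c_j}$, and coprime pairs discarded. For pairs sharing a column index ($\{g_{ij},g_{i\ell}\}$ or $\{g_{ij},g_{kj}\}$), the three-term relations write the S-polynomial as a monomial times a third minor. All of that is correct.

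The gap is in the only case with real content. Normalizing $i\le k$, the remaining overlaps are the pairs $\{g_{ij},g_{j-1,\ell}\}$ with $i<j-1$ and $j<\ell$. These share $x_j$ through $u_j=x_j^{c_j}$ and $v_{j-1}=x_j^{d_j}$. Your description of these pairs is mis-indexed: $g_{ki}$ and $g_{i\ell}$ have coprime initial terms, and $g_{ij}$, $g_{i+1,\ell}$ overlap this way only when $j=i+2$. More seriously, your recipe fails in general. You propose to reduce first by a minor $g_{k-1,k}$ with pure-power initial term $x_k^{c_k+d_k}$ (your $g_{i,i+1}$, $g_{i-1,i}$) and then finish with a three-term relation. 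This only works in the degenerate configurations $j=i+2$ or $\ell=j+1$. When $i<j-2$ and $\ell>j+1$ (e.g.\ $S(g_{1,4},g_{3,6})$ for $n=6$), no term of the S-polynomial is divisible by any $x_k^{c_k+d_k}$; the shared variable $x_j$ appears only to the power $|c_j-d_j|$. In general the S-polynomial is also not a monomial multiple of a single minor, so this case is not ``handled the same way''.

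The missing step, which is Case (b) of the paper, is to reduce by the non-adjacent minors $g_{i,j-1}$ and then $g_{j,\ell}$. Explicitly, with $m=\min(c_j,d_j)$,
\[
S(g_{ij},g_{j-1,\ell})=x_j^{c_j-m}x_{\ell+1}^{d_{\ell+1}}\,g_{i,j-1}-x_j^{d_j-m}x_i^{c_i}\,g_{j,\ell}.
\]
The initial terms of the two summands are exactly the two terms of the S-polynomial, so this is a standard representation. In your Pl\"ucker language, the identity is $v_{j-1}$ times the $u$-relation on $\{i,j,\ell\}$ minus $u_j$ times the $v$-relation on $\{i,j-1,\ell\}$, divided by $x_j^{m}$; the $g_{i\ell}$ terms cancel. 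The division is exactly where the special shape of the matrix enters, since $u_j$ and $v_{j-1}$ are powers of the same variable.

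Your concern about the wraparound is unnecessary. No initial term involves $x_1$, so $x_{n+1}=x_1$ only enters trailing terms and the identity holds verbatim for $\ell=n$. Finally, the Hilbert-function fallback is not a shortcut. Computing the standard monomials of $J+(x_1)$ is a Gr\"obner problem of the same kind. Passing back from $J+(x_1)$ to $J$ would also require $x_1$ to be a nonzerodivisor on $S/J$.
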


\begin{proof}
We use Buchberger's criterion (see \cite[Theorem 2.3.2]{HH}).
We write
$f \xrightarrow{g} h$
if $h$ is obtained from $f$ by a single polynomial reduction using $g$,
and
$f \xrightarrow{\mathcal G} h$
if $f$ reduces to $h$ by a finite sequence of polynomial reductions
using elements of $\mathcal G$.
For $1 \leq i<j \leq n$, let
\[f_{i,j}=x_{i+1}^{d_{i+1}}x_j^{c_j}-x_i^{c_i} x_{j+1}^{d_{j+1}}.\]
Thus $\mathcal G=\{f_{i,j} \mid 1 \leq i< j \leq n\}$.
It suffices to prove that
\[
S(f_{i,j},f_{k,\ell}) \stackrel {\mathcal G} \longrightarrow 0
\]
for all $1\leq i<j\leq n$ and $1 \leq k<\ell \leq n$.
To prove this we may assume $i \leq k$.

Since each $f_{i,j}$ is homogeneous by our assumption,
we have
\[\init(f_{i,j})=x_{i+1}^{d_{i+1}}x_j^{c_j}.\]
If $\init(f_{i,j})$ and $\init(f_{k,\ell})$ are relatively prime then $S(f_{i,j},f_{k,\ell})$ reduces to $0$ (see \cite[Lemma 2.3.1]{HH}).
Under the assumption $i\leq k$, if
$\init(f_{i,j})$ and $\init(f_{k,\ell})$ are not relatively prime,
then either $j=\ell$, $i=k$, or $k=j-1$.
Hence it suffices to consider the following cases.

\textbf{Case (a).} Suppose that $j=\ell$ or $i=k$.
If $j=\ell$,
then we have 
\begin{align*}
S(f_{i,j},f_{k,j})
=x_{k+1}^{d_{k+1}}f_{i,j}-x_{i+1}^{d_{i+1}}f_{k,j}
%=-x_i^{c_i}x_{k+1}^{d_{k+1}}x_{j+1}^{d_{j+1}}
%+x_k^{c_k}x_{i+1}^{d_{i+1}}x_{j+1}^{d_{j+1}}
=x_{j+1}^{d_{j+1}}
\left(x_{i+1}^{d_{i+1}}x_k^{c_k}
-x_i^{c_i}x_{k+1}^{d_{k+1}}\right)
\stackrel {f_{i,k}}\longrightarrow 0
\end{align*}
as desired.
The case $i=k$ is similar.
In this case we may assume $j<\ell$ by exchanging the ordering of $f_{i,j}$ and $f_{i,\ell}$ if necessary,
and we have
\[
S(f_{i,j},f_{i,\ell})
=-x_i^{c_i} (x_{j+1}^{d_{j+1}}x_\ell^{c_\ell}-x_j^{c_j}x_{\ell+1}^{d_{\ell+1}})
\stackrel{f_{j,\ell}} \longrightarrow 0.
\]

\textbf{Case (b).} Suppose $k=j-1$.
By Case (a) we may assume $i<j-1$ and $j<\ell$.
Assume $c_j \geq d_j$.
In this case we have
\[
S(f_{i,j},f_{k,\ell})=x_\ell^{c_\ell}f_{i,j}
-x_{i+1}^{d_{i+1}}x_j^{c_j-d_j}f_{j-1,\ell}
=
x_{i+1}^{d_{i+1}}
x_{j-1}^{c_{j-1}}x_j^{c_j-d_j}x_{\ell+1}^{d_{\ell+1}}
-x_i^{c_i}x_{j+1}^{d_{j+1}}x_\ell^{c_\ell}.
\]
Observe that $f_{j,\ell}=x_{j+1}^{d_{j+1}}x_\ell^{c_\ell}
-x_j^{c_j}x_{\ell+1}^{d_{\ell+1}}$
and
$f_{i,j-1}
=x_{i+1}^{d_{i+1}}x_{j-1}^{c_{j-1}}
-x_i^{c_i}x_j^{d_j}$.
One can see
\begin{align*}
S(f_{i,j},f_{k,\ell})
\stackrel{f_{i,j-1}}\longrightarrow
x_i^{c_i}x_j^{d_j}x_j^{c_j-d_j}
x_{\ell+1}^{d_{\ell+1}}
-x_i^{c_i}x_\ell^{c_\ell}x_{j+1}^{d_{j+1}}
\stackrel{f_{j,\ell}}\longrightarrow 0
\end{align*}
as desired.
The case $c_j <d_j$ is treated similarly. Indeed, in this case, we have
\[
S(f_{i,j},f_{k,\ell})
=x_{i+1}^{d_{i+1}}x_{j-1}^{c_{j-1}}x_{\ell+1}^{d_{\ell+1}}
-x_i^{c_i}x_j^{d_j-c_j}x_{j+1}^{d_{j+1}}x_\ell^{c_\ell},
\]
and it is easy to see that this again reduces to zero, first by $f_{i,j-1}$ and then by $f_{j,\ell}$.
\end{proof} 

We next use Lemma \ref{lem2.0} to describe a $\F$-basis of
$S/(J+(x_1))$.
Let $\mathrm{Mon}(S)$ be the set of all monomials of $S$.
%For a monomial ideal $I$ of $S$, we write
%\[\mathcal M(I)=\{ u \in \mathrm{Mon}(S) \mid u \not \in I \}.\]
For monomials $m_1,\dots,m_t \in S$,
we write 
\[\langle m_1,\dots,m_t \rangle
=\{ m \in \mathrm{Mon}(S) \mid m \mbox{ divides } m_k \mbox{ for some }k\}.
\]
For $k=2,3,\dots,n$, we define
\[
u_k=\left( \prod_{2 \leq i \leq k} x_i^{d_i-1} \right) x_k \left( \prod_{k \leq j \leq n} x_j^{c_j-1} \right).
\]

\begin{lemma}
    \label{lem2.1}
    With the same notation as in Lemma \ref{lem2.0}, one has
    \[\mathcal M\big(\init(J+(x_1))\big)= \langle u_2,\dots,u_n\rangle.\]
\end{lemma}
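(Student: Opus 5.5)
By Lemma \ref{lem1} and Lemma \ref{lem2.0}, $\init(J+(x_1))=\init(J)+(x_1)$, and $\init(J)$ is generated by the monomials $x_{i+1}^{d_{i+1}}x_j^{c_j}$ for $1\le i<j\le n$. So $\init(J+(x_1))$ is the monomial ideal
\[
L=(x_1)+\bigl(x_{i+1}^{d_{i+1}}x_j^{c_j}\mid 1\le i<j\le n\bigr).
\]
Modulo $x_1$ we can sort these generators into three types:
\begin{itemize}
\item for $i=1$ and $j=n$: $x_2^{d_2}x_n^{c_n}$;
\item for $i=n$: none, since $j\le n$ forces $i<n$;
\item for $i+1=j$, i.e.\ $2\le j\le n$: $x_j^{c_j+d_j}$;
\item for $i+1<j$: $x_p^{d_p}x_q^{c_q}$ with $2\le p<q\le n$.
\end{itemize}
In total, a monomial $m=x_1^{k_1}\cdots x_n^{k_n}$ lies outside $L$ iff three conditions hold: $k_1=0$; $k_j<c_j+d_j$ for every $j$; and there is no pair $p<q$ (both in $\{2,\dots,n\}$) with $k_p\ge d_p$ and $k_q\ge c_q$.

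Next I will describe $\langle u_2,\dots,u_n\rangle$ combinatorially. The exponent vector of $u_k$ has a zero $x_1$-exponent. It has $d_i-1$ at positions $2\le i<k$, $c_j-1$ at positions $k<j\le n$, and $c_k+d_k-1$ at position $k$. So $m\in\langle u_2,\dots,u_n\rangle$ iff $k_1=0$ and there is some $k\in\{2,\dots,n\}$ with
\[
k_i\le d_i-1 \ (i<k),\qquad k_k\le c_k+d_k-1,\qquad k_j\le c_j-1\ (j>k).
\]

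I will prove the equality of the two sets by double inclusion.

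\textbf{Divisors of $u_k$ avoid $L$.} Take $m\mid u_k$. Then $k_1=0$ and every exponent is below $c_j+d_j$, since $c_j,d_j\ge1$. Suppose a bad pair $p<q$ existed, with $k_p\ge d_p$ and $k_q\ge c_q$. From $k_p\ge d_p$ we get $p\ge k$, and from $k_q\ge c_q$ we get $q\le k$. Hence $p\ge k\ge q$, contradicting $p<q$.

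\textbf{Monomials outside $L$ divide some $u_k$.} Take $m\notin L$. Let $k$ be the smallest index in $\{2,\dots,n\}$ with $k_k\ge d_k$; if no such index exists, put $k=n$. Then $k_i\le d_i-1$ for $i<k$ by minimality, and $k_k\le c_k+d_k-1$. Now let $j>k$. In the first case ($k_k\ge d_k$), the absence of a bad pair $(k,j)$ gives $k_j\le c_j-1$. In the case $k=n$ there are no $j>k$ to check. So $m\mid u_k$.

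The only delicate point is the bookkeeping of the reduced generators modulo $x_1$. One must check that the index $i=1$ only contributes $x_2^{d_2}x_j^{c_j}$, the case $p=2$ of the third type (and these are genuinely present for all $j\ge2$, since $i=1<j$). Generators involving $x_{n+1}=x_1$ occur only as trailing terms, never as initial terms, so they contribute nothing. After that check, the rest is the elementary combinatorics above.
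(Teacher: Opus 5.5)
Your proof is correct and follows the paper's argument: you identify $\init(J+(x_1))$ via Lemmas \ref{lem1} and \ref{lem2.0}, and for the reverse inclusion you take the smallest index $k$ with $k_k\ge d_k$ (or $u_n$ if there is none), exactly as the paper does, while writing out the divisor check that the paper calls immediate. Your first bullet, $x_2^{d_2}x_n^{c_n}$, is only the case $p=2,\ q=n$ of your last type, so it is redundant (which is also why you list four bullets for ``three types''), but this is harmless.
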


\begin{proof}
By Lemmas \ref{lem1} and \ref{lem2.0},
\[
\init(J+(x_1))
=
(x_1)+
(x_j^{c_j}x_{i+1}^{d_{i+1}}
\mid 1\leq i<j\leq n).
\]
It is immediate from the definition that none of
$u_2,\dots,u_n$ belongs to $\init(J+(x_1))$.
Hence
\[
\langle u_2,\dots,u_n\rangle
\subseteq
\mathcal M(\init(J+(x_1))).
\]

For the reverse inclusion, let
$m=x_2^{k_2}\cdots x_n^{k_n}
\in\mathcal M(\init(J+(x_1)))$.
If $k_i<d_i$ for every $i=2,\dots,n$, then $m$ divides $u_n$.
Otherwise, let $\ell$ be the smallest integer such that $k_\ell\geq d_\ell$.
Then
\[
k_i\leq d_i-1\qquad (2\leq i<\ell).
\]
Since $x_\ell^{d_\ell}x_j^{c_j}\in\init(J)$
for $j=\ell,\dots,n$,
we also have
\[
k_\ell\leq d_\ell+c_\ell-1
\quad\text{and}\quad
k_j\leq c_j-1\qquad (\ell<j\leq n).
\]
Therefore $m$ divides $u_\ell$.
\end{proof}
%\begin{proof}
%Since $\init(J+(x_1))=\init(J)+(x_1)$ by Lemma \ref{lem1}
%and since Lemma \ref{lem2.0} says
%\[\init(J)=(x_j^{c_j}x_{i+1}^{d_{i+1}} \mid 1 \leq i < j \leq n ),\]
%it is clear that $u_2,\dots,u_n$ are not contained in $\init(J+(x_1))$.
%This proves $\mathcal M(J+(x_1)) \supset \langle u_2,\dots,u_n \rangle$.
%
%We prove
%$\mathcal M\big(\init(J+(x_1))\big) \subset \langle u_2,\dots,u_n \rangle$.
%Let $m=x_2^{b_2} \cdots x_n^{b_n} \in \mathcal M\big(\init(J+(x_1))\big).$
%We will prove $m \in \langle u_2,\dots,u_n \rangle.$
%If $b_i < d_i$ for all $i=2,\dots,n$ then $m$ divides $u_n$.
%Suppose $b_i \geq d_i$ for some $i$ and take the smallest integer $k$ such that $b_k \geq d_k$.
%Then, by the choice of $k$, we have
%\begin{align}
%    \label{eq2.2}
%    b_i \leq d_i-1 \ \ \mbox{ for }i=2,\dots,k-1.
%\end{align}
%On the other hand, since $x_k^{d_k} x_j ^{c_j} \in \init(J)$ for $j=k,k+1,\dots,n$,
%we have
%\begin{align}
%    \label{eq2.3}
%b_k \leq d_k+c_{k}-1 \ \ \mbox{ and } \ \     b_j \leq c_j-1 \ \ \mbox{ for }j=k+1,\dots,n.
%\end{align}
%Then by \eqref{eq2.2} and \eqref{eq2.3} the monomial $m$ divides $u_k$.
%\end{proof}

\begin{lemma}
    \label{lem2.3}
    With the same notation as in Lemma \ref{lem2.0}, the set
    \[\mathcal B=\{u_2,\dots,u_n\}\]
    is a $\F$-basis of $\socle\big(S/(J+(x_1))\big)$.
\end{lemma}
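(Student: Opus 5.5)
We need to show that the images of $u_2,\dots,u_n$ in $S/(J+(x_1))$ are nonzero, lie in the socle, and span it. The plan is to work first with the monomial algebra $S/\init(J+(x_1))$, where everything is combinatorial, and then pass back to $S/(J+(x_1))$.

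\textbf{The monomial quotient.} By Lemma \ref{lem2.1}, $\mathcal M(\init(J+(x_1)))=\langle u_2,\dots,u_n\rangle$ is an order ideal of monomials. The socle of a monomial quotient is spanned by the monomials that are maximal in this order ideal. Each maximal monomial must be one of the $u_k$. Conversely, no $u_k$ properly divides another $u_\ell$. To check this, compare exponents for $k<\ell$:
\begin{itemize}
\item at $x_k$, $u_k$ has exponent $d_k+c_k-1$, while $u_\ell$ has $d_k-1$;
\item at $x_\ell$, $u_\ell$ has exponent $d_\ell+c_\ell-1$, while $u_k$ has $c_\ell-1$.
\end{itemize}
So the $u_k$ are pairwise incomparable and distinct. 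Hence $\dim_\F\socle\big(S/\init(J+(x_1))\big)=n-1$, and Lemma \ref{lem3} gives $\dim_\F\socle\big(S/(J+(x_1))\big)\le n-1$. Also, by Lemma \ref{lem2}, the $u_k$ are linearly independent in $S/(J+(x_1))$.

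\textbf{Socle membership.} It remains to show $x_m u_k\in J+(x_1)$ for every $m$ and $k$. For $m=1$ this is trivial. For $m\ge2$, I would not stop at $x_mu_k\in\init(J+(x_1))$, because that only says $x_mu_k$ reduces to a combination of standard monomials. Instead I would exhibit membership directly, using the generators $f_{i,j}$ modulo $x_1$:
\begin{itemize}
\item $f_{1,j}\equiv x_2^{d_2}x_j^{c_j}$ and $f_{i,n}\equiv x_{i+1}^{d_{i+1}}x_n^{c_n}$ are monomials modulo $x_1$;
\item for $1<i<j<n$, $f_{i,j}$ is a binomial $x_{i+1}^{d_{i+1}}x_j^{c_j}-x_i^{c_i}x_{j+1}^{d_{j+1}}$.
\end{itemize}

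Consider $x_mu_k$. If $m>k$, then $x_mu_k$ is divisible by $x_k^{d_k}x_m^{c_m}$, and if $m<k$ it is divisible by $x_m^{d_m}x_k^{c_k}$. If $m=k$, then $x_k^{d_k+c_k}$ is divisible by $x_k^{d_k}x_k^{c_k}$, which is $\init(f_{k-1,k})$. In each case I would apply the binomial relation and move exponent outward, toward index $2$ or index $n$:
\begin{itemize}
\item replace $x_{i+1}^{d_{i+1}}x_j^{c_j}$ by $x_i^{c_i}x_{j+1}^{d_{j+1}}$;
\item the factor $x_i^{c_i}$ then combines with the existing exponents ($u_k$ already contains $x_i^{d_i-1}$ for $i<k$, together with the one extra copy coming from $x_m$).
\end{itemize}
Iterating, the index pair spreads until either $i=1$, where $x_1$ kills the term, or $j+1=n+1$, where $x_1^{d_1}$ appears and again kills it.

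\textbf{Main obstacle.} I expect the bookkeeping in this chain of reductions to be the hardest part. At each step one has to verify that the required exponents $c_i$ and $d_{j+1}$ are actually present. I would organize it as an induction showing
\[
x_i^{d_i}x_j^{c_j}\cdot(\text{remaining part of } u_k)\in J+(x_1),
\]
by decreasing $i$ and increasing $j$, starting from the pair produced by $x_m$. A likely simplification is to use $x_{i}^{d_i}x_j^{c_j}\cdot\big(\prod_{i<s<j}x_s^{\ast}\big)\equiv0$ via the determinantal structure: the products $x_{i+1}^{d_{i+1}}x_j^{c_j}$ all lie in the same class modulo the minors.

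\textbf{Conclusion.} Once the $u_k$ are known to lie in the socle, the pieces combine. They are $n-1$ linearly independent socle elements, and the socle has dimension at most $n-1$. Therefore $\mathcal B=\{u_2,\dots,u_n\}$ is a basis of $\socle\big(S/(J+(x_1))\big)$.
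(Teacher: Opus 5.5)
Your proposal is correct and follows the paper's proof essentially step for step. The ingredients are the same: the bound $\dim_\F\socle(S/(J+(x_1)))\le n-1$ from Lemma \ref{lem3}, linear independence of the standard monomials $u_k$, and socle membership by an induction that repeatedly applies $x_s^{d_s}x_t^{c_t}\equiv x_{s-1}^{c_{s-1}}x_{t+1}^{d_{t+1}}$ to push the index pair outward until a power of $x_1$ appears. That induction is exactly the paper's claim for $x_s\prod_{2\le i\le s}x_i^{d_i-1}\cdot x_t\prod_{t\le j\le n}x_j^{c_j-1}$ with $2\le s\le t\le n$, and the bookkeeping closes because $c_{s-1},d_{t+1}\ge 1$.

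The only thing to discard is your ``likely simplification.'' It is false as stated: in the paper's second example, $x_3^{d_3}x_3^{c_3}=x_3^2\equiv x_2x_4\neq 0$ modulo $J+(x_1)$. What forces the chain to end in zero is the outer factors $\prod_{s<i}x_s^{d_s-1}\prod_{s>j}x_s^{c_s-1}$ of $u_k$, which your main induction already uses.
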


\begin{proof}
It follows from Lemma \ref{lem2.1} that
$\mathcal B$ is the set of maximal monomials in 
$\mathcal M(\init(J+(x_1)))$, and hence forms a $\F$-basis of
$\socle(S/\init(J+(x_1)))$.
By Lemmas \ref{lem2} and \ref{lem2.1},
the set $\mathcal B$ is linearly independent in $S/(J+(x_1))$.
Moreover, Lemma \ref{lem3} gives 
\[\dim_\F \mathrm{socle}\big(S/(J+(x_1))\big)\leq \dim_\F \mathrm{socle}\big(S/\init(J+(x_1))\big)=|\mathcal B|.\]
Thus, it remains only to prove that every $u_k$ belongs to $\mathrm{socle}\big(S/(J+(x_1))\big)$.

Fix $2 \leq k,\ell \leq n$.
We claim $x_\ell u_k =0$ in $S/(J+(x_1))$.
For this purpose, we show that
\begin{align}
\label{eq5.1}
x_s\left( \prod_{2 \leq i \leq s} x_i^{d_i-1} \right)
 x_t \left ( \prod_{ t \leq j \leq n} x_j^{c_j-1}\right) \equiv 0
 \ \ \mbox{ mod } J+(x_1)
\end{align}
for all $s,t$ with $2 \leq s \leq t \leq n$.
The desired equality $x_\ell u_k=0$ follows by taking
$(s,t)=(\ell,k)$ if $\ell\leq k$, and
$(s,t)=(k,\ell)$ if $\ell\geq k$.
%Note that $x_\ell u_k=0$ in $S/(J+(x_1))$
%follows by considering either the case $(s,t)=(\ell,k)$ or $(s,t)=(k,\ell)$.

We prove \eqref{eq5.1} by induction on $s$.
If $s=2$ then the assertion follows from
\[x_s\! \left( \prod_{2 \leq i \leq s}\! x_i^{d_i-1} \!\right)
\! x_t \!\left ( \prod_{ t \leq j \leq n} \! x_j^{c_j-1}\! \right) \! \equiv \!
 x_1^{c_1} x_{t+1}^{d_{t+1}} \! \left(\! \prod_{t <j \leq n} \! x_j^{c_j-1}\! \right) \!\equiv\! 0 \pmod {J+(x_1)}.\]
% mod $(x_2^{d_2}x_t^{c_t}-x_1^{c_1}x_{t+1}^{d_{t+1}},x_1) \subset J$.
The case $t=n$ is similar.
Suppose $s>2$ and $t<n$.
Then
\[x_s\left( \prod_{2 \leq i \leq s} x_i^{d_i-1} \right)
 x_t \left ( \prod_{ t \leq j \leq n} x_j^{c_j-1}\right) \equiv
x_{s-1}^{c_{s-1}} \left( \prod_{2 \leq i < s} x_i^{d_i-1} \right)
 x_{t+1}^{d_{t+1}} \left ( \prod_{ t < j \leq n} x_j^{c_j-1}\right) 
 \]
mod $J$ since $x_s^{d_s} x_t^{c_t}-x_{s-1}^{c_{s-1}} x_{t+1}^{d_{t+1}} \in J$.
The monomial on the right-hand side is equal to
\[
x_{s-1}^{c_{s-1}-1}x_{t+1}^{d_{t+1}-1}
\left[
x_{s-1}
\left(\prod_{2\leq i\leq s-1}x_i^{d_i-1}\right)
x_{t+1}
\left(\prod_{t+1\leq j\leq n}x_j^{c_j-1}\right)
\right].
\]
The expression in brackets belongs to $J+(x_1)$ by the induction
hypothesis, and hence so does the right-hand side.
%and this must be contained in $J+(x_1)$ by the induction hypothesis,
%as desired.
\end{proof}

We now prove our main theorem.

%\begin{theorem}
%\label{mainthm}
%Let $H=\langle a_1,\dots,a_n\rangle$ be the numerical semigroup ring minimally generated by $a_1,\dots,a_n$.
%Assume $\PF(H)=\{h+\alpha,\dots,h+(n-1)\alpha\}$
%and $h=\sum_{i=1}^n (c_i-1) a_i$ with $c_1,\dots,c_n \in \mathbb Z_{>0}$.
%If $(c_ia_i+\alpha)/a_{i+1}$ is a positive integer for all $i=1,2,\dots,n$,
%then
%\[I_H=I_2 \begin{pmatrix} x_1^{c_1} & \cdots & x_{n-1}^{c_{n-1}} & x_n^{c_n}\\ x_2^{d_2} & \cdots & x_{n}^{d_{n}} & x_1^{d_1} \end{pmatrix}\]
%with $d_{i+1}=(c_ia_i+\alpha)/a_{i+1}$ for $i=1,2,\dots,n$.
%\end{theorem}

\begin{proof}[Proof of Theorem \ref{maincor}]
As explained in the introduction, the implication
$(1)\Rightarrow(2)$ follows from \cite[Theorem 3]{KM}.
We prove $(2)\Rightarrow(1)$.
Let
\[
J=
I_2
\begin{pmatrix}
x_1^{c_1} & x_2^{c_2} & \cdots & x_n^{c_n}\\
x_2^{d_2} & x_3^{d_3} & \cdots & x_1^{d_1}
\end{pmatrix}.
\]
We will prove that $J=I_H$.

By \eqref{PFformula},
$
d_{i+1}a_{i+1}-c_i a_i=\alpha
$
for all $i$.
Hence $J$ is a graded ideal of $S$ and $J\subseteq I_H$.
By Lemma \ref{lem6}, it is therefore enough to show that
$
J+(x_1)=I_H+(x_1).
$
Consider the natural surjection
\[\psi : S/(J+(x_1)) \longrightarrow S/(I_H+(x_1)).\]
By assumption, the identities \eqref{PFformula} hold.
A direct computation from the definition of $u_k$ gives
\[
\deg u_k=a_1+h+(k-1)\alpha
\qquad (k=2,\dots,n).
\]
Therefore, under the natural isomorphism
$S/I_H\cong\F[H]$,
the set
$\mathcal B=\{u_2,\dots,u_n\}$
maps onto
$\{t^a\mid a\in a_1+\PF(H)\}$.
By Lemma \ref{lem7},  $\mathcal B$ is a $\F$-basis of
$
\socle(S/(I_H+(x_1))).
$
%
%Since \eqref{PFformula} says that 
%$\mathrm{PF}(H)$ can be written as 
%$\mathrm{PF}(H)=\{h+\alpha,h+2\alpha,\dots,h+(n-1)\alpha\}$ with
%\begin{align}
%    \label{eq:pf.1}
%    a_1+h+ (k-1) \alpha
%    &= -a_1+\left(\sum_{2 \leq i \leq k} (d_i-1)a_i \right) + a_k  + \left( \sum_{k \leq j \leq n} (c_j-1)a_j\right)
%\end{align}
%for $k=1,2,\dots,n-1$
%by the isomorphism $S/I_H \to \F[H]$
%the set $\mathcal B=\{u_2,\dots,u_n\}$ given in section 3 goes to $\{t^a \mid a \in a_1+\PF(H)\}$.
%Thus by Lemma \ref{lem7} the set $\mathcal B$ is a basis of $\socle(S/(I_H+(x_1)))$.
By Lemma \ref{lem2.3}, $\mathcal B$ is also a $\F$-basis of
$
\socle\big(S/(J+(x_1))\big).
$
Hence $\psi$ induces an isomorphism
from
$\socle\big(S/(J+(x_1))\big)$
to
$\socle\big(S/(I_H+(x_1))\big)$.

We claim that $\psi$ is injective.
Suppose, to the contrary, that
$0\neq f\in\ker\psi$.
By Lemma \ref{lem2.1}, the quotient $S/(J+(x_1))$ is
finite-dimensional over $\F$.
Then, by Lemma \ref{lem5}, there exists $m\in S$ such that
$0\neq mf\in\socle\big(S/(J+(x_1))\big)$.
However,
$\psi(mf)=m\psi(f)=0$,
contradicting the injectivity of $\psi$ on the socle.
Thus $\psi$ is injective, and hence
$J+(x_1)=I_H+(x_1)$.
Lemma \ref{lem6} now gives $J=I_H$.
\end{proof}

%Then by Lemma \ref{lem2.3} the map $\psi$ induces an isomorphism from $\socle\big(S/(J+(x_1))\big)$ to $\socle\big(S/(I_H+(x_1))\big)$.
%Such a map $\psi$ must be injective because if there is a non-zero element $f \in \mathrm{Ker}(\psi)$ then by Lemma \ref{lem5} we can choose $m \in S$ such that $0 \ne mf \in \socle\big(S/(J+(x_1))\big)$ but this contradicts that $\psi$ induces an isomorphism on socles because $\psi(mf)=0$.
%\end{proof}

\section{Proof of the second main result}

In this section, we prove our second main theorem.
Throughout this section, let
\[
H=\langle a_1,a_2,a_3,a_4\rangle
\]
be the numerical semigroup minimally generated by
$a_1,a_2,a_3,a_4$, and assume that
\[
\PF(H)=\{p_1=h+\alpha,p_2=h+2\alpha,p_3=h+3\alpha\}
\]
for some $h,\alpha\in\mathbb Z$.
We note that $h\in H$; see \cite[Lemma 2]{KM}.
We also note that the roles of $p_1$ and $p_3$ can be exchanged by
replacing $\alpha$ with $-\alpha$.
Indeed, setting $h'=h+4\alpha$, we have
$p_3=h'+(-\alpha)$,
$p_2=h'+2(-\alpha)$,
and $p_1=h'+3(-\alpha)$.
For every $w \in H$, we define
\[
h_i(w)=\max\{k\in\mathbb Z_{\geq0}\mid w-ka_i\in H\}.\]
If $w\in H$ has a unique factorization in $H$, then
\[
w=h_1(w)a_1+h_2(w)a_2+h_3(w)a_3+h_4(w)a_4.
\]
For simplicity, throughout this section we write
\[
\Ap=\Ap(H,a_1).
\]
Also, for $w\in\ZZ$, let $\overline w$ denote the unique element of
$\Ap$ satisfying
\[
\overline w\equiv w\pmod{a_1};
\]
see Lemma \ref{aperibasic}(2).
Let
\[p^+_1=a_1+p_1,\ p^+_2=a_1+p_2, \text{ and } p^+_3=a_1+p_3.\]
By Lemma \ref{aperibasic}(1),
$p_1^+,p_2^+,p_3^+$ are the maximal elements of $\Ap$
with respect to $\leq_H$.
In particular, for every $w\in\Ap$, there exists
$i\in\{1,2,3\}$ such that $w\leq_Hp_i^+$.

\begin{lemma}
    \label{4.1}
Let $w\in\Ap$ and $i\in\{2,3,4\}$ such that $h_i(w)=0$, and set
$u=\overline{w-a_i}.$
Then the following statements hold.
\begin{enumerate}
        \item[(1)] If $w-\alpha \in \Ap$ and $h_i(w-\alpha)>0$, then 
\[
u\not\leq_Hp_2^+,\
u\not\leq_Hp_3^+,\
u\not\leq_Hp_1^+-a_i,
\text{ and }
u\leq_Hp_1^+.
\]        
\item[(2)]
If $w+\alpha\in\Ap$ and $h_i(w+\alpha)>0$, then
\[
u\not\leq_Hp_1^+,\
u\not\leq_Hp_2^+,\
u\not\leq_Hp_3^+-a_i,
\text{ and }
u\leq_Hp_3^+.
\]
\end{enumerate}
\end{lemma}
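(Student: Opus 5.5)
The plan is to translate each relation $u\leq_H(\cdot)$ into a membership statement in $H$ and show that each forbidden relation would force a pseudo-Frobenius number into $H$. The one relation that does hold then follows from Lemma \ref{aperibasic}(1).

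\textbf{Step 1: the shape of $u$.} Since $u=\overline{w-a_i}\equiv w-a_i\pmod{a_1}$, we can write $u=w-a_i+ka_1$ for some $k\in\ZZ$. I claim $k\geq1$. If $k\leq 0$, then $w-a_i=u+(-k)a_1\in H$, which contradicts $h_i(w)=0$. So
\[
u=w-a_i+ka_1 \quad\text{with } k\geq 1 .
\]

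\textbf{Step 2: the three non-relations in (1).} Assume $w-\alpha\in\Ap$ and $h_i(w-\alpha)>0$, and set $x=w-\alpha-a_i\in H$. Then $u=x+\alpha+ka_1$. I use $p_2-\alpha=p_1$ and $p_3-\alpha=p_2$.
\begin{itemize}
\item If $u\leq_H p_2^+$, then $p_2+a_1-u=p_1-x-(k-1)a_1\in H$. Since $x\in H$ and $k\geq 1$, this gives $p_1\in x+(k-1)a_1+H\subseteq H$, which is impossible.
\item If $u\leq_H p_3^+$, the same computation gives $p_2\in H$, again impossible.
\item If $u\leq_H p_1^+-a_i$, then $p_1+a_1-a_i-u=p_1-w-(k-1)a_1\in H$. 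Since $w\in H$, this forces $p_1\in H$, a contradiction. This case does not even use the hypothesis on $w-\alpha$.
\end{itemize}

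\textbf{Step 3: the positive relation in (1).} Since $u\in\Ap$, Lemma \ref{aperibasic}(1) gives $u\leq_H p_j^+$ for some $j\in\{1,2,3\}$. Step 2 excludes $j=2$ and $j=3$, so $u\leq_H p_1^+$.

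\textbf{Step 4: part (2).} Part (2) follows by the symmetry noted at the start of this section. Replacing $\alpha$ by $-\alpha$ and $h$ by $h'=h+4\alpha$ exchanges $p_1$ and $p_3$ and fixes $p_2$. Alternatively, one can rerun Steps 2--3 with $x=w+\alpha-a_i\in H$, using $p_2+\alpha=p_3$ and $p_1+\alpha=p_2$.

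\textbf{Main obstacle.} I expect the only delicate point to be Step 1, namely making sure $k\geq1$. This is exactly where the hypothesis $h_i(w)=0$ enters, and it is what makes every subtracted term $(k-1)a_1$ lie in $H$. The rest is bookkeeping with $\alpha$-shifts of the pseudo-Frobenius numbers.
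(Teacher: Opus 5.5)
Your proof is correct and takes essentially the same route as the paper. Your $k\geq 1$ is the paper's observation $u>w-a_i$. Your identities such as $p_2^+-u=p_1-x-(k-1)a_1$ are the paper's comparison of $x+(p_2^+-u)=p_1^+-ka_1$ with $p_1^+$; you conclude via $p_j\notin H$ where the paper invokes the Ap\'ery property of $p_j^+$, which is the same fact. The closing appeal to maximality in Lemma~\ref{aperibasic}(1) and the symmetry argument for part (2) also match the paper.
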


\begin{proof}
Statement (2) follows from statement (1) by the symmetry obtained by
replacing $\alpha$ with $-\alpha$ and exchanging the roles of
$p_1$ and $p_3$.
Thus it suffices to prove (1).
Since $w-a_i\notin H$ and $u$ is the smallest element of $H$
congruent to $w-a_i$ modulo $a_1$, we have
$u>w-a_i$.
Since $u\equiv w-a_i \pmod {a_1}$, we have
    \begin{itemize}
       \item [(i)] $(w-\alpha-a_i) +(p_2^+-u) \equiv p_1^+ \pmod {a_1}$ and $(w-\alpha-a_i) +(p_2^+-u) < p_1^+$.
        Since $p_1^+\in\Ap$, Lemma \ref{aperibasic}(2) implies that no
smaller integer congruent to $p_1^+$ modulo $a_1$ belongs to $H$.
Hence  $(w-\alpha -a_i)+(p_2^+-u) \not \in H$.
        Since $w-\alpha-a_i \in H$, it follows that $p_2^+-u \not \in H.$
       \item [(ii)] $(w-\alpha-a_i) +(p_3^+-u) \equiv p_2^+ \pmod {a_1}$ and $(w-\alpha-a_i) +(p_3^+-u) < p_2^+$.  Arguing as in (i), we have $(w-\alpha -a_i)+(p_3^+-u) \not \in H$ and hence $(p_3^+-u) \not \in H$.
       \item[(iii)] $w +(p_1^+-a_i-u) \equiv p_1^+ \pmod {a_1}$ and $w +(p_1^+-a_i-u) <p_1^+$.
Hence $w+(p_1^+-a_i-u) \not \in H$.
Since $w \in \Ap \subset H$, it follows that  $(p_1^+-a_i-u) \not \in H$.
    \end{itemize}
Statements (i)--(iii) show that $u \not \leq_H p_2^+$, $u \not \leq_H p_3^+$ and $u \not \leq_H p_1^+-a_i$.
Since $u\leq_H p_j^+$ for some $j$,
we must have $u\leq_H p_1^+$.
\end{proof}

\begin{corollary}
\label{cor4}
Let $w\in\Ap$ and $i\in\{2,3,4\}$.
If $w-\alpha,w+\alpha\in \Ap$, then
\[
h_i(w) \geq \min \{h_i(w-\alpha),h_i(w+\alpha)\}.
\]
\end{corollary}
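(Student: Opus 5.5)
The plan is to argue by contradiction: we reduce to the case $h_i(w)=0$ and then apply both parts of Lemma \ref{4.1} to the same element $u$, obtaining incompatible conclusions.

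Set $m=\min\{h_i(w-\alpha),h_i(w+\alpha)\}$. If $m=0$ there is nothing to prove, so assume $m\geq 1$. Suppose, for contradiction, that $k:=h_i(w)<m$. Put $w'=w-ka_i$.

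\textbf{Checking the hypotheses for $w'$.} By definition of $h_i$ we have $w'\in H$. Since $w'\leq_H w$ and $w\in\Ap$, Lemma \ref{aperibasic}(3) gives $w'\in\Ap$. Maximality of $k=h_i(w)$ gives $h_i(w')=0$.

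Next, $h_i(w-\alpha)\geq m>k$, so $w'-\alpha=(w-\alpha)-ka_i\in H$. Moreover $w'-\alpha\leq_H w-\alpha\in\Ap$, so $w'-\alpha\in\Ap$, again by Lemma \ref{aperibasic}(3). Its $h_i$-value satisfies $h_i(w'-\alpha)\geq h_i(w-\alpha)-k>0$. To see this, write $w-\alpha=h_i(w-\alpha)a_i+v$ with $v\in H$; then $w'-\alpha=(h_i(w-\alpha)-k)a_i+v$. The same argument shows $w'+\alpha\in\Ap$ and $h_i(w'+\alpha)>0$.

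\textbf{Deriving the contradiction.} We may now apply Lemma \ref{4.1} to $w'$ and $i$, with $u=\overline{w'-a_i}$. Part (1) applies because $w'-\alpha\in\Ap$ and $h_i(w'-\alpha)>0$; it yields $u\leq_H p_1^+$. Part (2) applies to the same $w'$, $i$ and $u$, because $w'+\alpha\in\Ap$ and $h_i(w'+\alpha)>0$; it yields $u\not\leq_H p_1^+$. These two conclusions contradict each other, so $h_i(w)\geq m$.

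The only step needing care is the reduction from $w$ to $w'$: we must confirm that membership in $\Ap$ and positivity of $h_i(w'\pm\alpha)$ survive subtracting $ka_i$. Both follow from Lemma \ref{aperibasic}(3) and the definition of $h_i$, as shown above, so I expect no real obstacle here.
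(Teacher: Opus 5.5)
Your proposal is correct and follows the paper's proof: you reduce to $w'=w-h_i(w)a_i$, which satisfies $h_i(w')=0$, $w'\pm\alpha\in\Ap$ and $h_i(w'\pm\alpha)>0$, and then apply both parts of Lemma~\ref{4.1} to the same $u=\overline{w'-a_i}$. The paper states the contradiction as $u$ lying below none of $p_1^+,p_2^+,p_3^+$, whereas you set the conclusion $u\leq_H p_1^+$ of part (1) against $u\not\leq_H p_1^+$ of part (2); this is the same argument, and you verify the transferred hypotheses more carefully than the paper does.
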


\begin{proof}
Let $h=h_i(w)$.
Suppose, to the contrary, that
\[h< \min\{h_i(w-\alpha),h_i(w+\alpha)\}.\]
By the definition of $h$ and Lemma \ref{aperibasic}(3),
$w-\alpha-h a_i,w-ha_i,w+\alpha-ha_i$ are all elements of $\Ap$.
Also, 
$h_i(w-\alpha-h
a_i)>0$, $h_i(w+\alpha-h a_i)>0$ and $h_i(w-ha_i)=0$.
Set $u=\overline {(w-ha_i-a_i)}$.
Applying Lemma \ref{4.1} to $w-ha_i$, we obtain $u \not \leq_H p_1^+,$ $u \not \leq_H p_2^+$ and $u \not \leq_H p_3^+$.
This contradicts the maximality statement in
Lemma \ref{aperibasic}(1).
\end{proof}

\subsection*{Proof of Theorem \ref{secondthm}}
We now prove Theorem \ref{secondthm}.
After relabeling the generators if necessary, we may assume that
$\Ap=\Ap(H,a_1)$ has unique factorizations in $H$.
We write
\begin{align*}
p_1^+ &= z_{12}a_2+z_{13}a_3+z_{14}a_4\\
p_2^+ &= z_{22}a_2+z_{23}a_3+z_{24}a_4\\
p_3^+ &= z_{32}a_2+z_{33}a_3+z_{34}a_4
\end{align*}
with $z_{ij} \in \ZZ_{\geq 0}$.
We note that $z_{ij}=h_j(p_i^+)$.
We need the following technical lemma.

\begin{lemma}
\label{4.2}\ 
\begin{enumerate}
\item
If $z_{12}>z_{22}\geq z_{32}$, $z_{14}\leq z_{24}<z_{34}$, and $z_{13}<z_{23}$,
then
$\alpha\equiv (z_{22}+1)a_2\pmod {a_1}$ and $z_{22}=z_{32}$.
\item
If $z_{12}>z_{22}\geq z_{32}$, $z_{14}\leq z_{24}<z_{34}$, and $z_{33}<z_{23}$,
then
$-\alpha\equiv (z_{24}+1)a_4\pmod {a_1}$ and $z_{24}=z_{14}$.
\end{enumerate}
\end{lemma}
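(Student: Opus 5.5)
The plan is to get both parts from Lemma~\ref{4.1}, together with uniqueness of factorizations in $\Ap$ and minimality of Ap\'ery elements in their residue classes (Lemma~\ref{aperibasic}(2)). Part (2) should then follow from part (1) by the symmetry $\alpha\mapsto-\alpha$, $p_1\leftrightarrow p_3$, $a_2\leftrightarrow a_4$. Under this symmetry the hypotheses of (1) with $z_{13}<z_{23}$ turn into those of (2) with $z_{33}<z_{23}$. So I concentrate on (1).

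\textbf{Step 1 (congruence for $\alpha$).} Set $w=p_2^+-z_{22}a_2=z_{23}a_3+z_{24}a_4$. Then $w\in\Ap$ by Lemma~\ref{aperibasic}(3), and $h_2(w)=0$ by uniqueness of factorization. Moreover $w-\alpha=p_1^+-z_{22}a_2$ lies in $\Ap$ (because $z_{12}>z_{22}$) and has $h_2(w-\alpha)=z_{12}-z_{22}>0$. Lemma~\ref{4.1}(1) with $i=2$ then gives, for $u=\overline{w-a_2}$,
\[
u\leq_H p_1^+ \quad\text{and}\quad u\not\leq_H p_1^+-a_2 .
\]
Hence the unique factorization of $u$ is $u=z_{12}a_2+y_3a_3+y_4a_4$ with $y_3\le z_{13}$ and $y_4\le z_{14}$.

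The goal is to show $u=p_1^+$, which is the same as $\alpha\equiv(z_{22}+1)a_2\pmod{a_1}$, since $u\equiv p_2^+-(z_{22}+1)a_2$ and $p_1^+=p_2^+-\alpha$. From $y_3\le z_{13}<z_{23}$ and $y_4\le z_{14}\le z_{24}$ we get
\[
(z_{23}-y_3)a_3+(z_{24}-y_4)a_4\equiv (z_{12}+1)a_2 \pmod{a_1},
\]
and the left side is an element of $\Ap$ lying below $p_2^+$.

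\textbf{Step 2 (pinning down $y_3,y_4$).} I would now run the symmetric argument with $i=4$. Take $w'=p_2^+-z_{24}a_4$, so that $w'+\alpha=p_3^+-z_{24}a_4$ has positive $h_4$ because $z_{34}>z_{24}$. Lemma~\ref{4.1}(2) gives $\overline{w'-a_4}\le_H p_3^+$ with $a_4$-coefficient exactly $z_{34}$. I would combine the two resulting congruences with the minimality of Ap\'ery elements. The aim is to show that any residual $a_3$- or $a_4$-part, i.e.\ $y_3<z_{13}$ or $y_4<z_{14}$, would produce two distinct factorizations of a single element of $\Ap$, or an Ap\'ery element that is not minimal in its class. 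This would force $y_3=z_{13}$ and $y_4=z_{14}$, hence $u=p_1^+$.

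\textbf{Step 3 ($z_{22}=z_{32}$).} Once $\alpha\equiv(z_{22}+1)a_2$ is known, suppose $z_{32}<z_{22}$. I would use Corollary~\ref{cor4} with $i=2$ at a suitable element $w$ such that $w,w\pm\alpha\in\Ap$, for instance obtained by subtracting $a_3,a_4$-parts from $p_2^+$. Alternatively, I would consider $\overline{p_3^+-(z_{32}+1)a_2}$ and use the congruence to see that it equals $p_2^+$ minus a smaller multiple of $a_2$. In either version, uniqueness of factorization should then give $h_2(p_3^+)\ge z_{22}$, a contradiction.

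\textbf{Main obstacle.} I expect Step 2 to be the hard part: upgrading ``$u\le_H p_1^+$ with full $a_2$-coefficient'' to $u=p_1^+$. I would first try to derive a contradiction from the displayed congruence by comparing with $p_2^+$ and $p_1^+$ modulo $a_1$ and invoking Lemma~\ref{aperibasic}(2), before bringing in the $i=4$ information.
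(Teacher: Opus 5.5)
Your symmetry reduction of (2) to (1) and your Step 1 are correct; your $u$ is the paper's $u_{0,0}$. You are also right that the congruence in (1) is equivalent to $y_3=z_{13}$, $y_4=z_{14}$. The gap is Step 2, which states an aim rather than an argument. The route you sketch also points the wrong way. Applying Lemma~\ref{4.1}(2) to $w'=p_2^+-z_{24}a_4$ is precisely the opening move of part (2), and exploiting it needs the hypothesis $z_{33}<z_{23}$, which is not available in part (1). Concretely, write $\overline{w'-a_4}=y_2'a_2+y_3'a_3+z_{34}a_4$ with $y_3'\le z_{33}$. The analogue of your displayed congruence then has $a_3$-coefficient $z_{23}-y_3'$, which need not be nonnegative. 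Moreover, the strict inequality $z_{13}<z_{23}$, i.e.\ $z_{13}+1\le z_{23}$, plays no real role in your sketch, yet it is exactly what the proof needs.

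The missing idea is to apply Lemma~\ref{4.1}(1) not once but to the whole box of elements $p_2^+-(z_{22}a_2+sa_3+ta_4)$ with $0\le s\le z_{13}$, $0\le t\le z_{14}$. This gives $u_{s,t}=\overline{p_2^+-(z_{22}+1)a_2-sa_3-ta_4}=z_{12}a_2+\theta_{s,t}a_3+\eta_{s,t}a_4$ with $\theta_{s,t}\le z_{13}$ and $\eta_{s,t}\le z_{14}$. You then descend one unit at a time.
Since $u_{s+1,t}+a_3\equiv u_{s,t}$, after discarding $z_{12}a_2$ the elements $(\theta_{s+1,t}+1)a_3+\eta_{s+1,t}a_4$ and $\theta_{s,t}a_3+\eta_{s,t}a_4$ are congruent modulo $a_1$. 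Both lie in $\Ap$: the first is $\le_H p_2^+$ because $z_{13}+1\le z_{23}$ and $z_{14}\le z_{24}$. So they are equal, and $\theta_{s,t}=\theta_{s+1,t}+1$. Iterating with $0\le\theta\le z_{13}$ forces $\theta_{z_{13},t}=0$.
The same descent in $t$ at $s=z_{13}$, now comparing pure multiples of $a_4$ below $p_3^+$ via $z_{14}+1\le z_{34}$, gives $u_{z_{13},z_{14}}=z_{12}a_2$. This is exactly $p_1^+\equiv p_2^+-(z_{22}+1)a_2\pmod{a_1}$. The unit steps are what keep every comparison element below $p_2^+$ or $p_3^+$, hence inside $\Ap$.

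Step 3 is also underspecified. Corollary~\ref{cor4} at $w=p_2^+$ only returns the hypothesis $z_{22}\ge z_{32}$, and you name no other $w$. Once $\alpha\equiv(z_{22}+1)a_2$ is known, $\overline{p_3^+-(z_{32}+1)a_2}$ is congruent to $p_2^++(z_{22}-z_{32})a_2$. Nothing follows from this until you know where that Ap\'ery element sits.
The paper gets this from Lemma~\ref{4.1}(1) applied to $w=p_3^+-z_{32}a_2$. Here $h_2(w)=0$, and $w-\alpha=p_2^+-z_{32}a_2\in\Ap$ has positive $h_2$ precisely when $z_{22}>z_{32}$. This yields $\overline{p_3^+-(z_{32}+1)a_2}=z_{12}a_2+sa_3+ta_4$ with $s\le z_{13}$ and $t\le z_{14}$. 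Hence $p_2^+\equiv(z_{12}-z_{22}+z_{32})a_2+sa_3+ta_4\pmod{a_1}$.
The right-hand side is $\le_H p_1^+$, so it lies in $\Ap$ and must equal $p_2^+$. That contradicts the incomparability of $p_1^+$ and $p_2^+$. So the contradiction comes from $p_2^+\le_H p_1^+$, not from a bound on $h_2(p_3^+)$.
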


\begin{proof}
The statement (2) is equivalent to the statement (1) if we replace $\alpha$ with $-\alpha$ and exchange $a_2$ and $a_4$.
Hence it suffices to prove (1).

Consider the set
\[X=\{(s,t) \in \mathbb Z^2_{\geq 0} \mid s \leq z_{13},t \leq z_{14}\}.
\]
For each $(s,t) \in X$, the elements
\[p^+_1-(z_{22}a_2+sa_3+ta_4) \text{ and }
p^+_2-(z_{22}a_2+sa_3+ta_4)
\]
belong to $\Ap$ by Lemma \ref{aperibasic}(3).
Since $p^+_1=p^+_2-\alpha$ and $h_2\big(p^+_2-(z_{22}a_2+sa_3+ta_4)\big)=0$,
by Lemma \ref{4.1}(1) if we set
\begin{align}
\label{ust}
u_{s,t}=\overline{p_2^+-((z_{22}+1)a_2+sa_3+ta_4)},
\end{align}
then
$u_{s,t} \leq _H p_1^+$ and $u_{s,t} \not \leq _H p_1^+-a_2$.
Since $u_{s,t}\leq_Hp_1^+$ and both $u_{s,t}$ and $p_1^+$
have unique factorizations, the coefficients in the factorization of
$u_{s,t}$ are bounded above by those of $p_1^+$.
Moreover, $u_{s,t}\not\leq_Hp_1^+-a_2$ forces the coefficient
of $a_2$ to be $z_{12}$.
Hence
\[
u_{s,t}=z_{12}a_2+\theta_{s,t} a_3 + \eta_{s,t} a_4
\]
for some  $(\theta_{s,t},\eta_{s,t}) \in X$.
We claim 
$u_{z_{13},z_{14}}=z_{12}a_2$.

We first prove
\[
\theta_{z_{13},t}=0
\qquad\text{for all } t=0,1,\dots,z_{14}.
\]
Let $s<z_{13}$. By the definition of $u_{s,t}$,
\[
u_{s+1,t}+a_3\equiv u_{s,t}\pmod{a_1},
\]
and hence
\[
(\theta_{s+1,t}+1)a_3+\eta_{s+1,t}a_4
\equiv
\theta_{s,t}a_3+\eta_{s,t}a_4
\pmod{a_1}.
\]
The right-hand side belongs to $\Ap$, since it is less than or equal
to $p_1^+$ with respect to $\leq_H$. The left-hand side also belongs
to $\Ap$, since
$\theta_{s+1,t}+1\leq z_{13}+1\leq z_{23}$
and
$\eta_{s+1,t}\leq z_{14}\leq z_{24}$.
Thus the two sides are equal. Since $\Ap$ has unique factorizations,
we obtain
$\theta_{s,t}=\theta_{s+1,t}+1$ and 
$\eta_{s,t}=\eta_{s+1,t}$.
Consequently, we have
\[
\theta_{z_{13},t}=\theta_{z_{13}-1,t}-1
=\cdots =\theta_{0,t}-z_{13}.
\]
Since $0 \leq \theta_{0,t} \leq z_{13}$,
we get
$\theta_{z_{13},t}=0$ as desired.

We next determine $\eta_{z_{13},z_{14}}$.
Since $\theta_{z_{13},t}=0$ for every $t$, we have
$u_{z_{13},t}
=
z_{12}a_2+\eta_{z_{13},t}a_4.$
Let $t<z_{14}$. By the definition of $u_{s,t}$,
we have $u_{z_{13},t+1}+a_4
\equiv
u_{z_{13},t}
\pmod{a_1}$.
Hence
\[
(\eta_{z_{13},t+1}+1)a_4
\equiv
\eta_{z_{13},t}a_4
\pmod{a_1}.
\]
Since
$\eta_{z_{13},t+1}+1
\leq z_{14}+1
\leq z_{34}$,
both
$(\eta_{z_{13},t+1}+1)a_4$
and $\eta_{z_{13},t}a_4$
are less than or equal to $p^+_3$ with respect to $\leq_H$.
Therefore they must be equal and we obtain
\[
\eta_{z_{13},t}
=
\eta_{z_{13},t+1}+1 \text{ for }t <z_{14}.
\]
But since $0 \leq \eta_{z_{13},t}\leq z_{14}
$ the above equality guarantees
$\eta_{z_{13},z_{14}}=0$,
and hence
$u_{z_{13},z_{14}}=z_{12}a_2$
as desired.

We now complete the proof.
Recall $u_{s,t}=\overline{(p_2^+-(z_{22}+1)a_2-sa_3-ta_4)}$. Then
\[ p^+_1=u_{z_{13},z_{14}}+z_{13}a_3+z_{14}a_4 \equiv p_2^+-(z_{22}+1)a_2 \pmod {a_1}.
\]
Since $p_2^+=p_1^++\alpha$ this proves the first statement
$\alpha \equiv (z_{22}+1)a_2 \pmod {a_1}$.
Next, we prove $z_{22}=z_{32}$.
If $z_{22}>z_{32}$, then
\[p^+_2-z_{32}a_2=p^+_3-z_{32}a_2-\alpha \text{ and } p^+_3-z_{32}a_2\]
are elements of $\Ap$ and $h_2(p^+_3-z_{32}a_2)=0$.
Hence by Lemma \ref{4.1}(1) we have
$\overline{p^+_3-(z_{32}+1)a_2}\leq_H p^+_1$ and
$\overline{p^+_3-(z_{32}+1)a_2} \not \leq _H p_1^+-a_2$.
Such an element must be of the form
\[z_{12}a_2+s a_3+ta_4\]
for some $(s,t) \in X$.
%This implies
%\[
%p^+_2 \equiv p^+_3-\alpha \equiv p^+_3-(z_{22}+1)a_2
%\equiv 
%p^+_3-(z_{32}+1)a_2 -(z_{22}-z_{32})a_2
%\equiv (z_{12}-(z_{22}-z_{32}))a_2+sa_3+ta_4
%\]
This implies
\begin{align*}
p_2^+
&\equiv p_3^+-\alpha\\
&\equiv p_3^+-(z_{22}+1)a_2\\
&\equiv
\bigl(z_{12}-(z_{22}-z_{32})\bigr)a_2
+sa_3+ta_4
\pmod{a_1}.
\end{align*}
The last expression must belong to $\Ap$ 
since it is $\leq_H p_1^+$.
But this means that this element equals $p^+_2$ since $\Ap$ contains exactly one element in its congruence class modulo $a_1$,
which contradicts the maximality of $p_2^+$.
Hence we have $z_{22}=z_{32}$.
\end{proof}

We now use Theorem \ref{maincor} to prove that $I_H$ is generated
by $2\times2$ minors.
Since $p_1^+,p_2^+,p_3^+$ are pairwise incomparable
with respect to $\leq_H$,
we have $z_{1j}>z_{2j}$ for some $j$.
After relabeling $a_2,a_3,a_4$ if necessary, we may assume that
$z_{12}>z_{22}$.
Then by Corollary \ref{cor4} we have $z_{22}\geq z_{32}$.
Since $p^+_2$ and $p^+_3$ are incomparable with respect to $\leq _H$,
we have $z_{33}>z_{23}$ or $z_{34}>z_{24}$.
By exchanging $a_3$ and $a_4$ if necessary,
we may assume $z_{34}>z_{24}$.
In particular, in this setting we have $z_{14} \leq z_{24}$ by Corollary \ref{cor4}.
To summarize, we can assume
\begin{align}
\label{katei}
z_{12}> z_{22} \geq z_{32} \text{ and } z_{14} \leq z_{24} < z_{34}.
\end{align}

[Step 1]
We prove that the case
$z_{13}=z_{23}=z_{33}$ cannot occur.
Suppose $z_{13}=z_{23}=z_{33}$.
Since $p_1^+,p_2^+$, and $p_3^+$ are incomparable with respect to $\leq _H$,
we must have
\[
z_{12}> z_{22} > z_{32},\
\ z_{13}=z_{23}=z_{33},
\text{ and }
z_{14} < z_{24} < z_{34}.
\]
Let $\alpha_3=\min \{k \in \mathbb Z_{>0}\mid k a_3 \in \langle a_1,a_2,a_4\rangle\}$ and assume that $\alpha_3 a_3= \tau_1 a_1+ \tau_2 a_2 + \tau_4 a_4$ for some $\tau_1,\tau_2,\tau_4 \in \mathbb Z_{\geq 0}$.
Since $p_1^+,p_2^+,p_3^+$ have unique factorizations in $H$,
we must have $z_{13}=z_{23}=z_{33}<\alpha_3$.
We actually have
\[ z_{13}=z_{23}=z_{33}=\alpha_3-1.\]
Indeed, since $(\alpha_3-1)a_3 \not \in \langle a_1,a_2,a_4\rangle$, we must have $(\alpha_3-1) a_3 \in \Ap=\Ap(H,a_1)$.
Hence $(\alpha_3-1)a_3 \leq_H p^+_i$ for some $i$,
which proves $z_{13}=z_{23}=z_{33}\geq \alpha_3-1$.

Since $p_1,p_3 \in \PF(H)$,
$p_1+a_4\in \Ap(H,a_4)$ and $p_3+a_2 \in \Ap(H,a_2)$. Thus
\begin{align*}
p_1^+-a_1+a_4&=p_1+a_4=k_1a_1+k_2a_2+k_3a_3,\\
p_3^+-a_1+a_2&=p_3+a_2=k'_1a_1+k'_3a_3+k'_4a_4
\end{align*}
for some $k_1,k_2,k_3,k_1',k_3',k_4' \in \mathbb Z_{\geq 0}$.
Using the relation $\alpha_3 a_3= \tau_1a_1+\tau_2 a_2+\tau_4a_4$,
we can take such integers with $k_3<\alpha_3$ and $k_3' <\alpha_3$.
Substituting $p_1^+=z_{12}a_2+z_{13}a_3+z_{14}a_4$ and $p_3^+=z_{32}a_2+z_{33}a_3+z_{34}a_4$ into the above equations, we have
\begin{align}
    \label{hennkou2}
    (z_{12}-k_2)a_2+(z_{13}-k_3)a_3+(z_{14}+1)a_4 =(k_1+1)a_1\\
    \label{hennkou3}
    (z_{32}+1)a_2+(z_{33}-k'_3)a_3+(z_{34}-k_4')a_4 =(k'_1+1)a_1.
    \end{align}
If $(z_{12}-k_2,z_{13}-k_3,z_{14}+1) \leq (z_{22},z_{23},z_{24})$ then we can rewrite $p_2^+=z_{22}a_2+z_{23}a_3+z_{24}a_4$ in the form
\[p_2^+=(k_1+1)a_1+\ell_2a_2+\ell_3a_3+\ell_4a_4\]
with $\ell_2,\ell_3,\ell_4 \in \mathbb Z_{\geq 0}$, which contradicts $p_2^+ \in \Ap(H,a_1)$.
Since $z_{13}-k_3\leq z_{23}$ and
$z_{14}+1\leq z_{24}$, it follows that $z_{12}-k_2>z_{22}$.
Similarly, by comparing $(z_{32}+1,z_{33}-k_3',z_{34}-k_4')$ and $(z_{22},z_{23}=z_{33},z_{24})$ we have $z_{34}-k_4' >z_{24}$.
Hence
\begin{align}
    \label{hennkou4}
    z_{12}-k_2 > z_{22} \geq z_{32}+1
    \text{ and } z_{34}-k_4' >z_{24} \geq z_{14}+1.
\end{align}
By taking the difference between \eqref{hennkou2} and \eqref{hennkou3} we have
\begin{align}
    \label{hennkou5}
    \big((z_{12}\!-\!k_2)\!-\!(z_{32}\!+\!1)\big)a_2
    \!+\!(k_3'\!-\!k_3)a_3\!=\!(k_1\!-\!k_1')a_1\!+\!\big( (z_{34}\!-\!k_4')\!-\!(z_{14}\!+\!1)\big)a_4.\end{align}
We conclude a contradiction using \eqref{hennkou4} and \eqref{hennkou5}.
Observe that $|k_3'-k_3|<\alpha_3$.
Since $((z_{12}-k_2)-(z_{32}+1),k_3'-k_3) \leq (z_{12},z_{13}=\alpha_3-1)$,
if $k_1-k_1'\geq0$, then \eqref{hennkou4} and \eqref{hennkou5}
give two distinct factorizations of $p_1^+$, a contradiction (note that $k_3'-k_3$ could be negative).
On the other hand,
since $(k_3-k'_3,(z_{34}-k'_4)-(z_{14}+1)) \leq (z_{33}=\alpha_3-1,z_{34})$,
if $k_1'-k_1\geq0$, then \eqref{hennkou4} and \eqref{hennkou5}
give two distinct factorizations of $p_3^+$, again a contradiction.
This proves that $z_{13}=z_{23}=z_{33}$ cannot occur.
\bigskip

[Step 2] 
We next reduce the proof to the case where
\begin{align}
    \label{desired}
z_{12}>z_{22}= z_{32},\
z_{13}<z_{23}>z_{33}, \text{ and }
z_{14}= z_{24}< z_{34}.
\end{align}
We proved in Step 1 that $z_{13},z_{23},z_{33}$ are not all equal.
Since replacing $\alpha$ with $-\alpha$ and simultaneously
exchanging $a_2$ and $a_4$ preserves \eqref{katei} while exchanging
the roles of $z_{13}$ and $z_{33}$, we may assume
$z_{13}\ne z_{23}$.
We prove that we can assume $z_{13}<z_{23}$.

Suppose that $z_{13}>z_{23}$. Then by Corollary \ref{cor4}
we have $z_{13}>z_{23} \geq z_{33}$
and therefore
\[
z_{12}>z_{22}\geq z_{32},\
z_{13}>z_{23}\geq z_{33}, \text{ and }
z_{14}\leq z_{24}< z_{34}.
\]
However, since $p^+_2$ and $p^+_3$ are incomparable with respect to $\leq_H$,
we have either $z_{22}\ne z_{32}$ or $z_{23} \ne z_{33}$.
By exchanging $a_2$ and $a_3$ if necessary,
we may assume $z_{23}\ne z_{33}$.
In this setting, we have
\[
z_{12}>z_{22}\geq z_{32},\
z_{13}>z_{23}> z_{33}, \text{ and }
z_{14}\leq z_{24}< z_{34}.
\]
But if we exchange $\alpha$ and $-\alpha$ as well as $a_2$ and $a_4$,
the situation becomes
\[
z_{12}>z_{22}\geq z_{32},\
z_{13}<z_{23}< z_{33}, \text{ and }
z_{14}\leq z_{24}< z_{34}.
\]
(We actually soon see that this situation cannot occur.)
Hence we may assume both \eqref{katei} and
$z_{13}<z_{23}$.

We now show the desired condition \eqref{desired}.
By \eqref{katei} and
$z_{13}<z_{23}$, we have
\[
z_{12}>z_{22}\geq z_{32},\
z_{13}<z_{23}, \text{ and }
z_{14}\leq z_{24}< z_{34}.
\]
Then by Lemma \ref{4.2}(1)
we have $\alpha \equiv (z_{22}+1)a_2 \pmod{a_1}$
and
\[
z_{12}>z_{22}= z_{32},\
z_{13}<z_{23}, \text{ and }
z_{14}\leq z_{24}< z_{34}.
\]
Since $p_2^+$ and $p_3^+$ are incomparable with respect to $\leq_H$,
the above conditions say $z_{23}>z_{33}$.
Then by Lemma \ref{4.2}(2),
we have $-\alpha \equiv (z_{24}+1)a_4 \pmod {a_1}$
and we get the desired condition \eqref{desired}.

We finally prove that condition (2) in Theorem \ref{maincor} is satisfied (and therefore $I_H$ is generated by $2$-minors).
Recall that $p_1=h+\alpha$, $p_2=h+2\alpha $, and $p_3=h+3\alpha$.
By \eqref{desired},
there exist positive integers
$c_2,c_3,c_4,d_2,d_3,d_4,\beta$ such that
\begin{align}
\label{-1-}
\begin{array}{ll}
h+\alpha&= p_1 =-a_1+ (c_2+d_2-1)a_2+(c_3-1)a_3+(c_4-1)a_4,\\
h+2\alpha&= p_2 = -a_1+ (d_2-1)a_2+(\beta-1) a_3+(c_4-1)a_4,\\
h+3\alpha&=  p_3 =-a_1+ (d_2-1)a_2+(d_3-1)a_3+(c_4+d_4-1)a_4.
\end{array}
\end{align}
Since $\alpha\equiv d_2a_2 \pmod {a_1}$ by Lemma \ref{4.2}(1),
we also have
\[
h+a_1=p_1^+-\alpha \equiv (c_2-1)a_2+(c_3-1)a_3+(c_4-1)a_4 \pmod {a_1}.
\]
Since $h \in H$ we have $p_1^+-\alpha=h+a_1 \in H$.
Then since the right-hand side of the above equation belongs to $\Ap$
and since $\alpha<d_2a_2$ (as $\alpha\equiv d_2a_2 \pmod {a_1}$, $\alpha \not \in H$ and $d_2a_2 \in \Ap$) we must have
\begin{align}
\label{-2-}
h+a_1=p_1^+-\alpha = c_1 a_1 +(c_2-1)a_2+(c_3-1)a_3+(c_4-1)a_4
\end{align}
for some positive integer $c_1$.
Similarly,
since $-\alpha \equiv c_4a_4 \pmod{a_1}$, we have
\begin{align}
    \label{-3-}
h+4\alpha+a_1=p^+_3+\alpha =(d_2-1)a_2+(d_3-1)a_3+(d_4-1)a_4+d_1a_1\end{align}
for some positive integer $d_1$.
Then, by Theorem \ref{maincor} and equations \eqref{-1-}, \eqref{-2-} and \eqref{-3-},
to complete the proof of the theorem
what we must prove is $\beta=c_3+d_3$.

We first note that by equations \eqref{-1-}, \eqref{-2-} and \eqref{-3-},
we have
\begin{align}
\label{iti}
\alpha&=p_1-(p_1-\alpha)=d_2a_2-c_1a_1,\\
\label{ni}
\alpha&=p_2-p_1=(\beta-c_3)a_3-c_2a_2,\\
\label{sann}
\alpha&=p_3-p_2=d_4a_4-(\beta-d_3)a_3,\\
\label{shi}
\alpha&=(p_3+\alpha)-p_3=d_1a_1-c_4a_4.
\end{align}
By \eqref{ni} and \eqref{sann},
we have
\[
(2\beta-c_3-d_3)a_3=c_2a_2+d_4a_4.
\]
If $(2\beta-c_3-d_3)<\beta,$ then the factorization of $p^+_2$ in $H$ is not unique, so we must have $\beta \geq c_3+d_3$.
Set
\[q=p_1^+-(c_3-1)a_3-(c_4-1)a_4=(c_2+d_2-1)a_2 \text{ and } u=\overline{q-a_3}.\]
Since $q,q+\alpha \in \Ap$, $h_3(q)=0$, and $h_3(q+\alpha)>0$,
by Lemma \ref{4.1}(2) we have $u \leq_H p_3^+$, $u \not \leq _H p^+_1, u \not \leq_H p^+_2$ and $u \not \leq_H p^+_3-a_3$.
Hence $u$ can be written in the form
\[
u=sa_2+(d_3-1)a_3+ta_4 \ \ \text{with } s\leq d_2-1 \text{ and } c_4 \leq t \leq c_4+d_4-1
\]
On the other hand, since
\[
u \equiv q-a_3=(c_2+d_2-1)a_2-a_3\pmod {a_1}
\]
and $u>q-a_3$,
there is a positive integer $\sigma$ such that
\[
sa_2+(d_3-1)a_3+ta_4=u=(c_2+d_2-1)a_2-a_3+\sigma a_1
\]
and we get
\[(c_2+d_2-1-s)a_2+\sigma a_1=d_3a_3+ta_4.
\]
Consider the following equation which follows from \eqref{ni} and \eqref{shi}
\[
c_2a_2+d_1a_1=(\beta-c_3)a_3+c_4a_4.
\]
Taking the difference between the above two equations, we obtain
\begin{align}
\label{lastdiff}
(d_2-1-s)a_2+(\beta-c_3-d_3)a_3=(t-c_4)a_4+(d_1-\sigma)a_1.
\end{align}
Observe that the coefficients $d_2-1-s,\beta-c_3-d_3,t-c_4$ are non-negative. Moreover
\[
(d_2-1-s)a_2+(\beta-c_3-d_3)a_3\leq_H p^+_2
\text{ and }
 (t-c_4)a_4 \leq_H p^+_3,
 \]
so both elements on the left belong to $\Ap$.
It follows that $d_1-\sigma=0$.
Indeed, if $d_1-\sigma>0$, then \eqref{lastdiff} implies that the
left-hand side does not belong to $\Ap$; if $d_1-\sigma<0$, then
the right-hand side does not belong to $\Ap$.
Hence
\[
(d_2-1-s)a_2+(\beta-c_3-d_3)a_3=(t-c_4)a_4
\]
but 
since every element of $\Ap$ has a unique factorizations in $H$,
the last equality forces
$d_2-1-s=0,\beta-c_3-d_3=0$ and $t-c_4=0$.
This proves $\beta=c_3+d_3$,
completing the proof of Theorem \ref{secondthm}.

\begin{remark}
Numerical semigroups having an Ap\'ery set of unique expression were studied by Rosales \cite{Rosales}. In particular, it was shown in \cite{Rosales} that, under this assumption, a minimal presentation of the numerical semigroup can be described in terms of the minimal elements lying outside the Ap\'ery set. Our proof of Theorem \ref{secondthm} first obtains explicit presentations of the maximal elements of $\Ap(H,a_1)$ and then uses these presentations to control the defining ideal. Rosales' result appears to be closely related to the second step of our proof, although we have not investigated whether it can be applied directly in our setting.
\end{remark}

\section*{Final remark and examples}
Although $\Ap(H,a_i)$ does not always have a unique factorization,
we still think that the approach of this paper is useful to study Conjecture \ref{conj} at least for the case when $n=4$.
For example, Lemma \ref{uniqueAP} suggests that if the conjecture holds then by an appropriate choice of $i$ one should have (i) $a_i+p_1$ and $a_i+p_2$ have the unique factorizations in $H$,
or (ii) $a_i+p_2$ and $a_i+p_3$ have the unique factorizations in $H$. If one can prove such a uniqueness statement only assuming that $\PF(H)$ forms an arithmetic progression of length $3$,
some of the argument in this paper will be applicable to attach the $n=4$ case of the conjecture.

We conclude the paper with two examples of four-generated numerical semigroups whose pseudo-Frobenius numbers form an arithmetic progression, one for which the Apéry set has unique factorizations and one for which it does not.

\begin{example}
    Let $H=\langle a_1=17,a_2=40,a_3=42,a_4=45\rangle$.
    Then
    $\PF(H)=\{145,151,157\}$
    with $h=139$ and $\alpha=6$.
    Thus, we have
    \begin{equation*}
        I_H=\mathrm{I}_2\left(\begin{matrix}
    X_1^2&X_2^3&X_3^2&X_4\\
    X_2&X_3^3&X_4^2&X_1^3
    \end{matrix}\right).
    \end{equation*}
    Since $1=d_2\leq c_2=3$ and $2=d_4 \geq c_4=1$, by Lemma \ref{uniqueAP}, this Ap\'ery set has unique factorizations in $H$.
\end{example}

\begin{example}
    Let $H=\langle a_1=7,a_2=15,a_3=16,a_4=17\rangle$.
    Then
    $\PF(H)=\{25,26,27\}$
    with $h=24$ and $\alpha=1$.
    We have
    \begin{equation*}
        I_H=I_2\left(\begin{matrix}
    X_1^2&X_2&X_3&X_4^2\\
    X_2&X_3&X_4&X_1^5
    \end{matrix}\right).
    \end{equation*}
    The element $32$ of $\Ap$ has the following two factorizations,
    \begin{equation*}
        32 = a_2+a_4 = 2a_3.
    \end{equation*}
    Thus, this Ap\'ery set does not have unique factorizations in $H$.    
\end{example}

\noindent
\textbf{Acknowledgments}:
The first author is partly supported by KAKENHI 25K06943.
We would like to thank Shumpei Higuchi for developing the computer program used for the computational experiments in this research.

\end{document}